\documentclass[11pt]{amsart}
\usepackage{amsfonts}
\usepackage{amsmath}
\usepackage{amsthm}
\usepackage{bm,bbm}
\usepackage[dvips]{graphicx}
\usepackage{caption}
\usepackage{color}

\usepackage{bigints}

\usepackage{mathtools}
\usepackage{commath}

\bibliographystyle{plain}

\usepackage[hypertexnames=false,linktocpage=true, bookmarks=true]{hyperref}
\hypersetup{colorlinks=true,linkcolor=blue,anchorcolor=blue,citecolor=blue,filecolor=blue, urlcolor=blue,bookmarksnumbered=true,pdfstartview=FitB}

\parindent=10pt

\textheight 8.8in

\textwidth 6.5in

\oddsidemargin 0cm

\evensidemargin 0cm

\topmargin-.1in

\numberwithin{equation}{section}

\pagestyle{myheadings}
\allowdisplaybreaks

\theoremstyle{plain}
\newtheorem{theorem}{Theorem}[section]
\newtheorem{lemma}[theorem]{Lemma}
\newtheorem{proposition}[theorem]{Proposition}
\newtheorem{corollary}[theorem]{Corollary}
\theoremstyle{definition}

\def\r{\rho}

\def\th{\theta}

\def\oi{[0, 1]^d}

\def\ti{\times}
\def\tk{\tau_k}
\def\k{\kappa}

\def\ff{\infty}

\def\ois{E_0}

\def\d{\dif}
\def\ms{\mathsf}
\def\beqn{\begin{equation}}
\def\beqn*{$$}
\def\eeqn{\end{equation}}

\def\P{\mathbb{P}}

\def\E{\mathbb{E}}
\def\Pn{\mathcal P_n}

\newcommand{\reals}{{\mathbb R}}

\newcommand{\R}{\reals}
\newcommand{\bbn}{{\mathbb N}}

\newcommand{\vep}{\varepsilon}
\newcommand{\bbz}{\protect{\mathbb Z}}

\newcommand{\I}{\mathcal I}

\newcommand{\B}{\mathcal B}

\newcommand{\bell}{{\bm \ell}}

\newcommand{\one}{{\mathbbm 1}}

 %expectation

 %conditional probability

\linespread{1.25}

%added after 8.11.2018

\newcommand{\remove}[1]{}

%added after 05.12.2020

%added after 06.12.2020

%added after 08.20.2020

\def\enp{\end{proof}}
\def\bel{\begin{lemma}}
\def\bep{\begin{proof}}
\def\enl{\end{lemma}}
\newcommand{\M}{\mathcal M}

%added after 04.01.2021

\newcommand{\Pnr}{\mathcal P_n|}

%added after 02.24.2022

\newcommand{\QmM}{K_\ell}
\newcommand{\Leb}{\ms{Leb}}
\newcommand{\kln}{\kappa_{\ell,n}}
\newcommand{\ktln}{\widetilde \kappa_{\ell,n}}
\newcommand{\D}{\mathcal D}

\begin{document}

\bibliographystyle{abbrv}

\renewcommand{\baselinestretch}{1.05}

\title[Large deviations for $k$-nearest neighbor balls]
{Large deviations  for the volume of $k$-nearest neighbor balls}

\author{Christian Hirsch}
\address{Department of Mathematics\\ Aarhus University \\  Ny Munkegade, 118, 8000, Aarhus C,  Denmark.}
\email{hirsch@math.au.dk}

\author{Taegyu Kang}
\address{Department of Statistics\\
Purdue University \\
West Lafayette, 47907, USA}
\email{kang426@purdue.edu}

\author{Takashi Owada}
\address{Department of Statistics\\
Purdue University \\
West Lafayette, 47907, USA}
\email{owada@purdue.edu}

\thanks{CH would like to acknowledge the financial support of the CogniGron research center and the Ubbo Emmius Funds (Univ.~of Groningen). TO's research was partially supported by the AFOSR grant FA9550-22-0238  and the NSF grant DMS-1811428.}

\subjclass[2010]{Primary 60F10. Secondary 60D05, 60G55}
\keywords{Large deviation principle, $\mathcal M_0$-convergence, $k$-nearest neighbor ball, point process, stochastic geometry}

\begin{abstract}
This paper develops the large deviations theory for the point process associated with the Euclidean volume of $k$-nearest neighbor balls centered around the points of a homogeneous Poisson or a binomial point processes in the unit cube. Two different types of large deviation behaviors of such point processes are investigated. Our first result is the Donsker-Varadhan large deviation principle, under the assumption that the centering terms for the volume of $k$-nearest neighbor balls grow to infinity more slowly than those needed for Poisson convergence. Additionally, we also study large deviations based on the notion of $\mathcal M_0$-topology, which takes place when the centering terms tend to infinity sufficiently fast, compared to those for Poisson convergence. As applications of our main theorems, we discuss large deviations for the number of Poisson or binomial points of degree at most $k$ in a geometric graph in the dense regime. 

\end{abstract}

\maketitle

\section{Introduction}

The main theme of this paper is to develop the large deviations theory for the point process associated with the Euclidean volume of $k$-nearest neighbor balls. 
We consider the unit cube $[0,1]^d$ equipped with the toroidal metric 
$$
\ms{dist} (x,y) = \min_{z\in \bbz^d} \| x-y+z \|, 
$$
where $\| \cdot \|$ denotes the Euclidean metric in $\R^d$. 
Then, the unit cube $[0,1]^d$ is considered as a flat torus with a periodic boundary. 
Let $B_r(x) = \big\{ y\in [0,1]^d: \ms{dist} (x,y)\le r \big\}$, $r>0$, be the closed ball in $[0,1]^d$ of radius $r$ centered at $x\in[0,1]^d$. 
Given a point measure $\omega$ in $[0,1]^d$ and a point $x\in [0,1]^d$, and a fixed integer $k\ge1$, define 
\begin{equation} \label{e:def.k.NN.dist}
R_k(x,\omega) := \inf \big\{ r>0: \omega\big( B_r(x)\setminus \{x\} \big) \ge k \big\}
\end{equation}
to be  the \emph{$k$-nearest neighbor distance} of $x$; it gives a radius $r$ for  which $B_r(x)\setminus \{x\}$ contains exactly $k$ points of $\omega$ with one of those points lying on the boundary of  $B_r(x)$. 

Given a homogeneous Poisson point process $\Pn$ on  $[0,1]^d$ with intensity $n$, we are interested in the stochastic behavior of the point process
\begin{equation}  \label{e:def.L.kn}
	L_{k,n} := \begin{cases} 
	\sum_{X\in \Pn}\delta_{(X, n\theta_d R_k(X,\Pn)^d -a_n)} & \text{ if } |\Pn|>k,\\
	\emptyset & \text{ if } |\Pn| \le k, 
	\end{cases}
\end{equation}
where $\delta_{(x,y)}$ is the Dirac measure at $(x,y)\in [0,1]^d\times \R$, and $\theta_d$ is volume of the unit ball in $\R^d$, so that $\theta_d R_k(X,\Pn)^d$ represents the volume of a $k$-nearest neighbor ball centered at $X\in\Pn$. 
Further, $\emptyset$ represents the null measure, i.e., the measure assigning zeros to all Borel measurable sets. 
The process \eqref{e:def.L.kn} dictates the magnitude of the normalized volume of $k$-nearest neighbor balls, as well as the location of their centers. 

There have been a number of  studies on the asymptotics  of the process  \eqref{e:def.L.kn} or its similar  variant, when the centering term $a_n$ is given as 
\begin{equation}  \label{e:Poisson.centering}
a_n = \log n + (k-1) \log \log n + \text{constant}. 
\end{equation}
In this case,  one may observe, asymptotically,  at most \emph{finitely many} $k$-nearest neighbor balls whose volume are approximately  $a_n/n$ up to the scale. As a consequence, the process \eqref{e:def.L.kn} will have a Poissonian structure in the limit \cite{penrose:1997, gyorfi:henze:walk:2019, chenavier:henze:otto:2022, otto:2020, bobrowski:schulte:yogeshwaran:2021}. In particular,   the rate of Poisson convergence has recently been derived in terms of the Kantorovich-Rubinstein distance \cite{bobrowski:schulte:yogeshwaran:2021} and the total variation distance \cite{otto:2020}. 

In addition to these results on Poisson convergence, there have also been many attempts at deriving other limit theorems for the functional of a $k$-nearest neighbor distance in  \eqref{e:def.k.NN.dist}, among them, central limit theorems in \cite{penrose:2000, penrose:2007a} and   laws of large numbers in \cite{penrose:yukich:2003, penrose:2007b, penrose:yukich:2011}. For example, Penrose \cite{penrose:2000} proved a (functional) central limit theorem for the number of inhomogeneous Poisson points $X_i$ with density $f$, such that $f(X_i)R_{k_n}(X_i,\Pn)^d$ does not exceed certain thresholds ($k_n$ is taken to be a function of $n$). Additionally, Penrose and Yukich \cite{penrose:yukich:2011} provided laws of large numbers for the sum of power weighted nearest neighbor distances. As for the large deviation results on the $k$-nearest neighbor distance, Schreiber and Yukich \cite{schreiber:yukich:2005} obtained  the \emph{Donsker-Varadhan large deviation principle} (LDP) (see Section 1.2 in \cite{dembo:zeitouni:1998} for a precise definition) for the functional of the length of  edges in the $k$-nearest neighbor graph in $\R^d$. This was  obtained as an application of a more general LDP for the spatial point process satisfying a weak dependence condition characterized by a radius of stabilization. Moreover, Hirsch et al.~\cite{hirsch:jahnel:tobias:2020} analyzed  lower tail large deviations for  general geometric functionals, including  the power-weighted edge lengths in the $k$-nearest neighbor graph. 

The primary objective of this paper is to provide comprehensive results on the asymptotics of the process \eqref{e:def.L.kn}, from the viewpoints of large deviations. We consider two distinct scenarios with respect to a divergence speed of the centering term $(a_n)$. The first scenario examined in this paper  is that 
\begin{equation}  \label{e:cond1.an.intro}
a_n \to\infty,  \ \ \ a_n - \log n - (k-1) \log \log n \to -\infty, \ \ \text{as } n\to\infty. 
\end{equation}
In this case, $(a_n)$ grows to infinity more slowly than  \eqref{e:Poisson.centering}. Intrinsically, there   appear  \emph{infinitely many} $k$-nearest neighbor balls as $n\to\infty$, whose volume are approximately $a_n/n$ up to the scale. Then, the process \eqref{e:def.L.kn} ``diverges" in the limit, in the sense that  $L_{k,n}(A)\to\infty$ as $n\to\infty$, for all (nice) measurable sets $A$. Thus, in order to  dictate its  large deviation behavior, one has to scale the process  \eqref{e:def.L.kn} by some growing sequence $u_n\to\infty$. 
More concretely, 
we aim to establish the Donsker-Varadhan LDP for the properly  scaled process $(L_{k,n}/u_n)_{n\ge1}$. 

In the second scenario of this paper, we consider the centering term $(a_n)$ satisfying 
\begin{equation}  \label{e:cond2.an.intro}
a_n - \log n - (k-1) \log \log n \to \infty, \ \ a_n=o(n), \ \ \text{as } n\to\infty. 
\end{equation}
Then, $(a_n)$ tends to infinity more rapidly than  \eqref{e:Poisson.centering}, so that the $k$-nearest neighbor balls centered around $\Pn$, whose  volume are approximately $a_n/n$ up to the scale, are even less likely to occur. In other words, the occurrence of such $k$-nearest neighbor balls is a ``rare event", in the sense of $\P\big(L_{k,n}(A)\ge1\big)\to0$ as $n\to\infty$, for all measurable sets $A$. In this setup, we present the other type of large deviation results, by detecting a sequence $v_n\to\infty$, such that 
\begin{equation}  \label{e:M0.seq.intro}
\big( v_n \P(L_{k,n}\in \cdot), \, n\ge1 \big)
\end{equation}
converges to a (non-trivial) limit measure. The topology underlying the convergence of \eqref{e:M0.seq.intro} is \emph{$\M_0$-topology}. The notion of $\M_0$-topology was first developed by \cite{hult:lindskog:2006a}. Since then, it has been used mainly for the study of regular variation of stochastic processes \cite{hult:samorodnitsky:2010, lindskog:resnick:roy:2014, fasen:roy:2016, segers:zhao:meinguet:2017, owada:2022b}. 

For the required LDP in Theorem \ref{t:LDP.L.kn} below, many of the techniques in our previous work \cite{hirsch:owada:2022} will be exploited. We first partition the unit cube $[0,1]^d$ into smaller cubes of equal volume, and define a collection of i.i.d.~point processes restricted to each of the small cubes. Next, using one of the main results in \cite{bobrowski:schulte:yogeshwaran:2021}, Proposition \ref{p:KR.conv1} proves that the law of these point processes restricted to small cubes converges to the law of certain Poisson point processes, in terms of the Kantorovich-Rubinstein distance. Subsequently, Proposition \ref{p:eta.zeta.negligible} justifies that this approximation is still feasible even for the version of the empirical measures. The main machinery here is the notion of maximal coupling, provided in \cite[Lemma 4.32]{kallenberg}. Other approximation arguments necessary for our proof will be completed in a series of results in Propositions \ref{p:exp.neg.eta.etap}--\ref{p:exp.neg.etap.Lp}. As a final note, we want to emphasize that the homogeneity assumption of $\Pn$ is crucial throughout our  proof. We anticipate that the LDP still holds even when $\Pn$ in \eqref{e:def.L.kn} is replaced by an inhomogeneous Poisson point process. It seems, however, that unlike the previous studies \cite{penrose:1997, chenavier:henze:otto:2022, otto:2020, bobrowski:schulte:yogeshwaran:2021}, this extension should require much more involved machinery; this will be left as a topic of future research. 

The rest of the paper is outlined as follows. In Section \ref{sec:LDP}, under the assumption \eqref{e:cond1.an.intro}, we give a precise setup for the point process \eqref{e:def.L.kn} and formalize the desired LDP. Section \ref{sec:M0} assumes condition \eqref{e:cond2.an.intro} and establishes the $\M_0$-convergence for the sequence   \eqref{e:M0.seq.intro}. In both settings, we also consider the case that the point processes  are generated by a binomial point process.   Unfortunately, in the context of LDPs, there are no unified results on the De-Poissonization scheme, such as \cite[Section 2.5]{penrose:2003}, which may allow us to extend the LDPs with a Poisson input to those with a binomial input.  Alternatively, we have proved directly the desired exponential equivalence (in terms of the total variation distance) between  processes with a Poisson input and those with a binomial input (see Corollary \ref{c:LDP.LknB}). Corollary \ref{c:M0.LknB} applies a similar kind of De-Poissonization machinery to the  $\M_0$-convergence in Theorem \ref{t:M0.L.kn}. 
Finally, as an application of Theorem \ref{t:LDP.L.kn} and Corollary \ref{c:LDP.LknB}, we deduce the LDP for the number of Poisson (or binomial) points of degree at most $k$ in a geometric graph in the dense regime. Additional applications in a similar vein from the perspective of $\M_0$-convergence, can be found in Corollary \ref{c:M0.application}. 
\medskip
\section{Large deviation principle}  \label{sec:LDP}

Given a point measure $\omega$ in $[0,1]^d$ and $x\in [0,1]^d$, define the $k$-nearest neighbor distance $R_k(x,\omega)$ as in \eqref{e:def.k.NN.dist}. 
Let $(a_n)_{n\ge1}$ be a sequence tending to infinity, such that 
\begin{equation}  \label{e:speed.an}
a_n - \log n -(k-1)\log \log n \to -\infty,  \ \ \ n\to\infty. 
\end{equation}
Given a homogeneous Poisson point process $\Pn$ on  $[0,1]^d$ with intensity $n$,  define the point process  \eqref{e:def.L.kn}
on the space  $M_p\big(\ois \big)$ of point measures on $\ois := [0, 1]^d\ti [s_0,\ff)$, where $s_0\in \R$ is a fixed real number. 
Our aim is to explore the large deviation behavior of $(L_{k,n})_{n\ge1}$. 
More specifically,  we define the scaling constants
\begin{equation}  \label{e:def.bn}
b_n = na_n^{k-1}e^{-a_n}, \ \ \ n\ge1, 
\end{equation}
and establish the LDP for $(L_{k,n}/b_n)_{n\ge1}$ in the space $M_+(\ois)$ of Radon measures on $\ois$. The space $M_+(\ois)$ is equipped with the weak topology. 
Under the assumption \eqref{e:speed.an}, it is elementary  to show  that $b_n\to\infty$ as $n\to\infty$. 

To state our LDP more precisely, we introduce the measure 
$$
\tk(\dif u) := \frac{ e^{-u} }{(k-1)!}\, \one \{ u\ge s_0 \}\dif u. 
$$
Here,  we provide two equivalent representations of the rate function. The first is based on the relative entropy. More precisely, writing $\Leb \otimes \tau_k$ for the product measure of the Lebesgue measure on $\oi$ and $\tk$, define the relative entropy of  $\rho \in M_+(\ois)$ with respect to $\Leb \otimes \tau_k$:
\begin{equation}  \label{e:def.relative.entropy}
H_k(\r \,| \, \Leb \otimes \tau_k) := \int_{\ois} \log\Big\{\frac{\d \r (x,u)}{\d \, (\Leb \otimes \tau_k)}\Big\} \r(\d x, \d u) - \r\big(\ois\big) + (\Leb \otimes \tau_k)(\ois), \ \ \text{if } \rho \ll \ms{Leb}\otimes \tau_k, 
\end{equation}
and $H_k(\r \,| \, \Leb \otimes \tau_k)=\infty$ otherwise.  The second representation of the rate function is given in terms of the Legendre transform. More concretely, 
$$
\Lambda_k^*(\rho):=\sup_{f\in C_b(\ois)} \Big\{  \int_{\ois} f(x, u)\rho(\dif x, \d u) - \int_{\ois} \big(e^{f(x, u)}-1 \big) \dif x\, \tk(\d u) \Big\},  \ \ \rho\in M_+\big(\ois\big), 
$$
where $C_b(\ois)$ is the space of continuous and bounded real-valued functions on $\ois$. 

The proofs of the results in this section are all deferred to Section \ref{sec:proof.LDP}. 
\begin{theorem}  \label{t:LDP.L.kn}
The sequence $(L_{k,n}/b_n)_{n\ge1}$ satisfies an LDP on $M_+\big( \ois \big)$ in the weak topology,  with rate $b_n$ and  rate function $H_k(\cdot\, | \, \Leb \otimes \tau_k)= \Lambda_k^*$.
\end{theorem}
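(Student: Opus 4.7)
The plan is to reduce the claimed LDP to the classical Donsker--Varadhan LDP for Poisson random measures by approximating $L_{k,n}$ with a superposition of independent point processes, each supported on a subcube of $\oi$ and coupled to a Poisson random measure through Proposition~\ref{p:KR.conv1}. As a reference LDP: if $\Phi_n$ is a Poisson random measure on $\ois$ with intensity $b_n(\Leb \otimes \tau_k)$, then $(\Phi_n/b_n)_{n\ge1}$ satisfies the LDP on $M_+(\ois)$ in the weak topology with speed $b_n$ and rate function $H_k(\cdot\,|\,\Leb \otimes \tau_k)$. This is standard: project onto arbitrary finite measurable partitions of $\ois$, apply Cram\'er's theorem at speed $b_n$ to the resulting vectors of Poisson counts, and pass to the projective limit via Dawson--G\"artner. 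A Fenchel--Moreau duality identifies the entropy functional with $\Lambda_k^*$, yielding the equality of the two rate function representations asserted in the theorem.

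Next, partition $\oi$ into $M_n^d$ congruent subcubes $Q_1,\dots,Q_{M_n^d}$ of side $1/M_n$, where $M_n\to\infty$ is chosen so that the cube side dominates the typical $k$-nearest neighbor distance of order $(a_n/n)^{1/d}$ (making boundary effects rare), while $M_n^d$ is not so large that per-cube approximation errors accumulate at exponential speed $b_n$. Because $\Pn$ is Poisson on a torus, the restrictions $\Pn^{(i)}:=\Pn\cap Q_i$ are mutually independent; set
\[
\eta_n^{(i)} := \sum_{X\in\Pn^{(i)}} \delta_{(X,\,n\theta_d R_k(X,\Pn^{(i)})^d - a_n)}, \qquad \eta_n := \sum_{i=1}^{M_n^d} \eta_n^{(i)},
\]
so that the $\eta_n^{(i)}$ are i.i.d.\ and $\eta_n$ is a localized version of $L_{k,n}$ in which the $k$-nearest neighbor distance is measured purely inside the containing cube. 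Applying Proposition~\ref{p:KR.conv1} within each $Q_i$ bounds the Kantorovich--Rubinstein distance between the law of $\eta_n^{(i)}$ and that of a Poisson random measure $\zeta_n^{(i)}$ on $Q_i\times[s_0,\infty)$ of intensity approximately $b_n M_n^{-d}\bigl(|Q_i|^{-1}\Leb|_{Q_i}\otimes\tau_k\bigr)$; Kallenberg's maximal coupling, invoked via Proposition~\ref{p:eta.zeta.negligible}, realizes coupled pairs $(\eta_n^{(i)},\zeta_n^{(i)})$ with $\prob{\eta_n^{(i)}\ne\zeta_n^{(i)}}=d_{\mathrm{TV}}(\eta_n^{(i)},\zeta_n^{(i)})$. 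Taking the $\zeta_n^{(i)}$ independent produces $\zeta_n:=\sum_i\zeta_n^{(i)}$, essentially a Poisson random measure of intensity $b_n(\Leb\otimes\tau_k)$ up to lower-order corrections.

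The argument is then completed by three exponential-equivalence statements at speed $b_n$, corresponding to Propositions~\ref{p:exp.neg.eta.etap}--\ref{p:exp.neg.etap.Lp}. First, $L_{k,n}$ differs from $\eta_n$ only at those $X\in\Pn$ whose genuine $k$-th nearest neighbor lies across a face of its containing cube, and such boundary events have probability decaying faster than every exponential in $b_n$ under the chosen $M_n$. Second, the probability that $\eta_n\ne\zeta_n$ is dominated by the union bound $\sum_i d_{\mathrm{TV}}(\eta_n^{(i)},\zeta_n^{(i)})$, which the tradeoff between $M_n^d$ and the per-cube KR bound of Proposition~\ref{p:KR.conv1} forces into the exp-negligible regime. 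Third, any residual mismatch between the intensity of $\zeta_n$ and the target $b_n(\Leb\otimes\tau_k)$ is eliminated by a superposition/thinning argument at the Laplace-functional level. Since exponentially equivalent sequences share LDPs, chaining these with the reference LDP concludes the proof.

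The main obstacle is the simultaneous control of $M_n^d$ independent Poisson approximations at the exponential scale $b_n$: Proposition~\ref{p:KR.conv1} delivers only polynomial decay of the per-cube KR distance, so $M_n$ must be tuned delicately, large enough that $1/M_n$ dominates $(a_n/n)^{1/d}$ yet small enough that $M_n^d$ times the per-cube error remains of order $e^{-Cb_n}$ for every $C$. Weaving this balance through the maximal-coupling scheme while preserving independence across cubes, and aligning it with the growth rates of $a_n$ and $b_n$, constitutes the technical heart of the argument.
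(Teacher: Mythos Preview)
Your architecture matches the paper's, but there is a genuine gap in the step coupling $\eta_n$ to the Poisson process $\zeta_n$. You frame this as needing $M_n^d$ times the per-cube KR error to be $o(e^{-Cb_n})$ for every $C$, and you correctly note that Proposition~\ref{p:KR.conv1} gives no such rate. This balance is in fact impossible, and it is also unnecessary: the quantity you should control is not $\P(\eta_n\neq\zeta_n)$ via a union bound, but rather $\P\big(d_{\ms{TV}}(\eta_n,\zeta_n)\ge\delta b_n\big)$, and the mechanism is Cram\'er-type, not union-bound-type. The paper's key structural choice is to take the number of subcubes equal to $b_n$ (so each cube has volume $b_n^{-1}$). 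Then, after the rescaling $\ktln$, each $\zeta_k^{(\ell)}$ is a Poisson process on $\ois$ with the \emph{fixed} intensity $\Leb\otimes\tau_k$ (not $b_nM_n^{-d}$ times something), the superposition $\zeta_{k,n}$ is exactly Poisson with intensity $b_n(\Leb\otimes\tau_k)$, and the LDP for $\zeta_{k,n}/b_n=\frac1{b_n}\sum_{\ell=1}^{b_n}(\cdot)$ is literally Sanov's theorem at speed $b_n$. No intensity correction by superposition/thinning is needed.

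For the exponential equivalence between $\hat\eta_{k,n}$ and $\hat\zeta_{k,n}$, one uses the maximal coupling so that $\hat\P(\hat\eta_{k,n}^{(\ell)}\neq\hat\zeta_k^{(\ell)})\le d_{\ms{KR}}\to 0$, bounds $d_{\ms{TV}}(\hat\eta_{k,n},\hat\zeta_{k,n})\le\sum_{\ell=1}^{b_n}d_{\ms{TV}}(\hat\eta_{k,n}^{(\ell)},\hat\zeta_k^{(\ell)})$, and applies exponential Markov to this sum of $b_n$ i.i.d.\ nonnegative random variables: for every $a>0$,
\[
\frac1{b_n}\log\hat\P\Big(\sum_{\ell=1}^{b_n}d_{\ms{TV}}(\hat\eta_{k,n}^{(\ell)},\hat\zeta_k^{(\ell)})\ge\delta b_n\Big)\le -a\delta+\log\hat\E\big[e^{a\,d_{\ms{TV}}(\hat\eta_{k,n}^{(1)},\hat\zeta_k^{(1)})}\big].
\]
Because $d_{\ms{TV}}(\hat\eta_{k,n}^{(1)},\hat\zeta_k^{(1)})\to 0$ in probability and a uniform-integrability bound (coming from the exponential moments of $\eta_{k,n}^{(1)}(E_0)$ and $\zeta_k^{(1)}(E_0)$) holds, the exponential moment on the right tends to $1$, and letting $a\to\infty$ gives $-\infty$. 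Thus the per-cube KR bound only needs to be $o(1)$, with no quantitative rate; the matching of the number of cubes to the LDP speed does all the work. The same i.i.d.-at-scale-$b_n$ plus exponential-Markov pattern is what drives the boundary comparisons in Propositions~\ref{p:exp.neg.eta.etap}--\ref{p:exp.neg.etap.Lp}, not any claim that individual boundary events are themselves exponentially rare.
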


The corollary below extends Theorem \ref{t:LDP.L.kn} to the case that the process is generated by a binomial point process $\B_n:=\{ X_1,\dots,X_n \}$. Precisely, we define, in the space $M_p(E_0)$, 
$$
L_{k,n}^\ms{B} := \sum_{X\in \B_n} \delta_{(X, n\theta_d R_k(X, \B_n)^d-a_n)}, \ \ n > k,
$$
and $L_{k,n}^\ms{B} \equiv \emptyset$ for $n \le k$.

\begin{corollary}  \label{c:LDP.LknB}
The sequence $(L_{k,n}^\ms{B}/b_n)_{n\ge1}$ satisfies an LDP on $M_+(E_0)$   in the weak topology, with rate $b_n$ and  rate function $H_k(\cdot\, | \, \Leb \otimes \tau_k)= \Lambda_k^*$.
\end{corollary}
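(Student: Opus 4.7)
The plan is to deduce Corollary \ref{c:LDP.LknB} from Theorem \ref{t:LDP.L.kn} via a De-Poissonization argument based on exponential equivalence. Namely, I will exhibit a coupling of $\Pn$ and $\B_n$ on a common probability space under which $(L_{k,n}/b_n)$ and $(L_{k,n}^{\ms{B}}/b_n)$ are exponentially equivalent at rate $b_n$ in the weak topology on $M_+(E_0)$, and then invoke the standard LDP transfer theorem for exponentially equivalent sequences (Theorem 4.2.13 in \cite{dembo:zeitouni:1998}) to carry the LDP over from the Poisson case.

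The coupling is the standard Poissonization: let $(Y_i)_{i\ge1}$ be i.i.d.\ uniform on $[0,1]^d$, let $N\sim\mathrm{Poisson}(n)$ be independent of this sequence, and set $\Pn=\{Y_1,\dots,Y_N\}$ and $\B_n=\{Y_1,\dots,Y_n\}$. Under this coupling the symmetric difference $\Pn\triangle\B_n$ has size $|N-n|$ and consists of i.i.d.\ uniform points.

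The heart of the proof is a stability estimate for the total variation between $L_{k,n}$ and $L_{k,n}^{\ms{B}}$ under this coupling. Two kinds of discrepancies arise: the $|N-n|$ points in $\Pn\triangle\B_n$ themselves, which can contribute to exactly one of $L_{k,n}$ or $L_{k,n}^{\ms{B}}$, and the points in $\Pn\cap\B_n$ whose $k$-nearest neighbor distance changes because one of their $k$ nearest neighbors lies in the symmetric difference. The geometric locality of $R_k$ bounds the number of the latter by $C(k,d)\,|N-n|$, where $C(k,d)$ depends only on $k$ and $d$. Crucially, because only points $X$ with $n\theta_d R_k(X,\cdot)^d\ge a_n+s_0$ contribute to $L_{k,n}$, an individual uniform point clears the truncation window only with probability of order $b_n/n$, which damps the raw count of discrepancies. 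Combining this thinning with Poisson concentration for $|N-n|$ and binomial Chernoff bounds for the number of threshold-crossing discrepancies, one arrives at
\[
\limsup_{n\to\infty}\frac{1}{b_n}\log\PP\Big(\big|\langle L_{k,n}/b_n,f\rangle-\langle L_{k,n}^{\ms{B}}/b_n,f\rangle\big|>\varepsilon\Big)=-\infty,
\]
for every $\varepsilon>0$ and every $f\in C_b(E_0)$, which is exactly the required exponential equivalence in the weak topology on $M_+(E_0)$.

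The main obstacle is this stability estimate in the regime where $b_n$ grows much more slowly than $\sqrt n$, so that even the typical Poisson fluctuation $|N-n|\sim\sqrt n$ is large compared with $b_n$. There the naive bound based on $|N-n|$ alone is insufficient, and one must exploit the fact that almost all of the added or deleted points, and the $R_k$-modifications they trigger, produce values below the truncation level $s_0$. The effective contribution to $L_{k,n}-L_{k,n}^{\ms{B}}$ therefore carries an extra factor of $b_n/n$, and a careful Chernoff argument on the number of threshold-crossing discrepancies is what produces genuine super-exponential decay in $b_n$; this is the most delicate step of the proof.
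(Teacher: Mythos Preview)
Your high-level strategy—reduce to Theorem \ref{t:LDP.L.kn} by proving exponential equivalence of $(L_{k,n}/b_n)$ and $(L_{k,n}^{\ms{B}}/b_n)$ under a coupling—is exactly what the paper does. Where your sketch departs from the paper, and where there is a genuine gap, is in the ``careful Chernoff argument'' you invoke at the end.

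You propose to bound the number of threshold-crossing discrepancies by a sum of indicators $\one\{E_j\}$, one for each point $y_j\in\Pn\triangle\B_n$, and then apply Chernoff with success probability $\sim b_n/n$. But these indicators are not independent, and—conditionally on the common configuration $\Pn\cap\B_n$—their success probabilities are \emph{not} uniformly $O(b_n/n)$. Concretely, given $\B_n$ (say $N>n$), the chance that $y_j$ triggers a discrepancy is governed by $\Leb\{x:\B_n(B_{r_n(s_0)}(x))\le k-1\}$ and by $\sum_{X:\,R_k(X,\B_n)>r_n(s_0)}\theta_d R_k(X,\B_n)^d$; the latter has mean of order $a_n b_n/n$, not $b_n/n$, and both quantities are random and can be atypically large. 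To run your Chernoff bound you would need these random success probabilities to be $O(b_n/n)$ (or even $O(a_n b_n/n)$) with probability $1-e^{-\omega(b_n)}$, and establishing that is itself an LDP-type statement about $\B_n$—precisely what you are trying to prove. The kissing-number bound $C(k,d)$ controls the \emph{worst-case} multiplicity per $y_j$ but does nothing to certify that the fraction of $y_j$'s landing in the ``bad'' region is small at the super-exponential level.

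The paper closes this gap with two devices that are absent from your sketch. First, rather than the raw coupling, it sandwiches $\B_n$ between a thinned and an augmented Poisson process, $\Pn^{(\varepsilon a_n^{-1},\ms{t})}\subset\B_n\subset\Pn^{(\varepsilon a_n^{-1},\ms{a})}$, and shows the sandwich fails with probability $e^{-\omega(b_n)}$ by a direct Poisson tail bound. Second—and this is the key step your argument lacks—it introduces a \emph{diluted} grid of subcubes $J_i$ of side $r_n(s_0)/\sqrt d$, spaced $3r_n(w_n)$ apart. On the sandwich event, whether $J_i$ is ``$n$-bad'' depends only on the Poisson processes in an $r_n(w_n)$-neighborhood of $J_i$, so the indicators $\one\{J_i\text{ is $n$-bad}\}$ are \emph{genuinely independent} Bernoulli variables. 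Their success probability $p_{n,\varepsilon}$ is then computed directly from the Poisson processes via the Mecke formula (no reference to $\B_n$), yielding $p_{n,\varepsilon}\le C^*\varepsilon\,\Leb(J_1)b_n$; binomial concentration and $\varepsilon\to0$ finish the proof. The spatial dilution is exactly what supplies the independence that your Chernoff bound assumes but does not justify.
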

\medskip

As an application of Theorem \ref{t:LDP.L.kn}, we deduce the LDP for $(T_{k,n}/b_n)_{n\ge1}$, where 
\begin{equation}  \label{e:def.T.kn}
T_{k,n} := \sum_{X\in \Pn}\one \Big\{ \Pn \big( B_{r_n(s_0)}(X) \big)\le k \Big\}, \ \ n\ge1. 
\end{equation}
The statistics \eqref{e:def.T.kn} represents the number of Poisson points of degree at most $k$ in a  geometric graph of  vertex set $\Pn$ and edges between $X_i$ and $X_j$ satisfying $\|X_i-X_j\| \le r_n(s_0)$, where
\begin{equation}  \label{e:def.rn.s0}
r_n(s_0) := \Big( \frac{a_n+s_0}{n\theta_d} \Big)^{1/d}. 
\end{equation}
The threshold radius in  \eqref{e:def.rn.s0}  ensures that the geometric graph under consideration  is of the \emph{dense regime}, such that  $nr_n(s_0)^d =(a_n+s_0)/\theta_d\to \infty$ as $n\to\infty$. Replacing $\Pn$ in \eqref{e:def.T.kn} with its binomial counterpart $\B_n$, we also derive the LDP for $(T_{k,n}^\ms{B}/b_n)_{n\ge1}$, where 
$$
T_{k,n}^\ms{B} := \sum_{X\in \B_n}\one \Big\{ \B_n \big( B_{r_n(s_0)}(X) \big)\le k \Big\}, \ \ n\ge1. 
$$

\begin{corollary}  \label{c:LDP.application}
The sequence $(T_{k,n}/b_n)_{n\ge1}$ satisfies an LDP with rate $b_n$ and rate function 
$$
I_k(x)=\begin{cases}
x\log \big( x/\alpha_k \big)-x+\alpha_k  &\text{ if } x\ge0, \\
\infty &\text{if } x < 0, 
\end{cases}
$$
where $\alpha_k=e^{-s_0}/(k-1)!$. Furthermore, $(T_{k,n}^\ms{B}/b_n)_{n\ge1}$ satisfies the same LDP as $(T_{k,n}/b_n)_{n\ge1}$. 
\end{corollary}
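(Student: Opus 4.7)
The plan is to apply the contraction principle to Theorem \ref{t:LDP.L.kn} via the total-mass functional $\Phi: M_+(\ois) \to [0,\infty]$ given by $\Phi(\rho) := \rho(\ois)$. First, I would verify that $T_{k,n} = L_{k,n}(\ois)$ almost surely. By the choice \eqref{e:def.rn.s0} of $r_n(s_0)$, the event $\{\Pn(B_{r_n(s_0)}(X)) \le k\}$ for $X \in \Pn$ is the same as $\{\Pn(B_{r_n(s_0)}(X) \setminus \{X\}) \le k-1\}$, which by the definition \eqref{e:def.k.NN.dist} of $R_k$ is equivalent to $\{R_k(X, \Pn) > r_n(s_0)\}$, i.e., $\{n\theta_d R_k(X, \Pn)^d - a_n > s_0\}$. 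Since $\P(n\theta_d R_k(X, \Pn)^d = a_n + s_0) = 0$ for each fixed $X$, one obtains $T_{k,n} = L_{k,n}(\ois)$ a.s.

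The functional $\Phi$ is continuous on $M_+(\ois)$ in the weak topology, since it is the integral of the bounded continuous function identically equal to one. Applying the contraction principle to Theorem \ref{t:LDP.L.kn} therefore yields an LDP for $(T_{k,n}/b_n)$ with rate $b_n$ and rate function
$$\widetilde I_k(x) = \inf\big\{ H_k(\rho\,|\,\Leb \otimes \tk) : \rho \in M_+(\ois),\, \rho(\ois) = x \big\}.$$
For $x < 0$ the constraint set is empty, so $\widetilde I_k(x) = \infty$. For $x \ge 0$, finiteness of the infimum forces $\rho \ll \Leb \otimes \tk$ with density $f$ satisfying $\int_{\ois} f\, \d(\Leb \otimes \tk) = x$. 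Noting that $(\Leb \otimes \tk)(\ois) = \int_{s_0}^\infty e^{-u}/(k-1)!\, \d u = \alpha_k$ and applying Jensen's inequality with the convex function $\phi(t) = t\log t$ against the probability measure $(\Leb \otimes \tk)/\alpha_k$ yields $\int f \log f \, \d(\Leb \otimes \tk) \ge x \log(x/\alpha_k)$, with equality for the constant density $f \equiv x/\alpha_k$. Plugging into \eqref{e:def.relative.entropy} gives $\widetilde I_k(x) = x\log(x/\alpha_k) - x + \alpha_k = I_k(x)$.

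For the binomial version $(T_{k,n}^\ms{B}/b_n)$, the same contraction-principle argument applied to Corollary \ref{c:LDP.LknB} in place of Theorem \ref{t:LDP.L.kn} delivers the identical LDP. There is no substantial obstacle in this proof: the reduction $T_{k,n} = L_{k,n}(\ois)$ a.s.\ is immediate from the choice of $r_n(s_0)$, the continuity of $\Phi$ follows because the constant function one is an admissible test function for the weak topology, and the entropy minimization under a total-mass constraint is a one-line application of Jensen's inequality. The only mild subtlety to check carefully is that the Poisson-to-binomial transfer afforded by Corollary \ref{c:LDP.LknB} is preserved under the continuous contraction $\Phi$, which it is since continuous contractions respect exponential equivalence.
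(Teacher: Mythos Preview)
Your approach is essentially identical to the paper's: both apply the contraction principle to Theorem \ref{t:LDP.L.kn} (resp.\ Corollary \ref{c:LDP.LknB}) via the total-mass map $\rho\mapsto\rho(\ois)$, and then identify the contracted rate function with $I_k$. Your write-up is in fact more self-contained, since you carry out the entropy minimization explicitly with Jensen's inequality, whereas the paper defers this step to an external reference; the only small point you omit is that the contraction principle requires the rate function $H_k(\cdot\,|\,\Leb\otimes\tau_k)=\Lambda_k^*$ to be good, which the paper invokes explicitly.
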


In the above, $I_k(x)$ coincides with a rate function in the LDP for $\big( n^{-1}\sum_{i=1}^n Y_i \big)_{n\ge1}$ where the $Y_i$ are i.i.d.~Poisson with mean $\alpha_k$. 
\medskip

\section{Large deviation under $\M_0$-topology}  \label{sec:M0}

In this section, we explore the large deviation behavior of the process \eqref{e:def.L.kn}, in the case that  $a_n$ tends to infinity more rapidly than in  the last section. Namely, we assume that $(a_n)_{n\ge1}$ satisfies
$$
a_n -\log n -(k-1)\log \log n \to \infty, \ \ \ n\to\infty. 
$$
We again introduce the sequence $b_n=na_n^{k-1}e^{-a_n}$ as in \eqref{e:def.bn}. However, unlike in the last section, 
$b_n \to0$ as $n\to\infty$, because 
\begin{align*}
b_n &= e^{-(a_n -\log n -(k-1)\log \log n )} \Big(  \frac{a_n}{\log n} \Big)^{k-1} \\
&\le C  e^{-(a_n -\log n -(k-1)\log \log n )}  \big( a_n -\log n -(k-1)\log \log n \big)^{k-1} \to 0, 
\end{align*}
for some $C>0$. 

Our  objective  is to investigate large deviations for the sequence  $(\P\circ L_{k,n}^{-1})_{n\ge1}$ of probability distributions of $(L_{k,n})_{n\ge1}$ on the space $M_p(E)$, where $E:= [0,1]^d\times (-\infty, \infty]$. A main challenge is that the space $M_p(E)$ is not locally compact, and therefore, the vague topology would no longer be applicable for the convergence of such probability distributions. To overcome this difficulty, we  adopt the notion of \emph{$\M_0$-topology}. The main feature of $\M_0$-topology is that the corresponding test functions are continuous and bounded real-valued functions on $M_p(E)$ that vanish in the neighborhood of the origin. For the space $M_p(E)$, one can take the null measure $\emptyset$ as its origin. Let  $B_{\emptyset, r}$ denote an open ball of radius $r>0$ centered at $\emptyset$ in the vague metric. 
Denote by $\M_0 = \M_0\big( M_p(E) \big)$ the space of Borel measures on $M_p(E)$, the restriction of which to $M_p(E)\setminus B_{\emptyset, r}$ is finite for all $r>0$. Moreover, define $\mathcal C_0 = \mathcal C_0\big(M_p(E) \big)$ to be the space of continuous and bounded real-valued functions on $M_p(E)$ that vanish in the neighborhood of $\emptyset$. Given $\xi_n, \xi \in \M_0$, we say that $\xi_n$ converges to $\xi$ in the $\M_0$-topology, denoted as $\xi_n\to \xi$ in $\M_0$, if it holds that $\int_{M_p(E)}g(\eta)\xi_n(\dif \eta) \to \int_{M_p(E)}g(\eta)\xi(\dif \eta)$ for all $g\in \mathcal C_0$. For more information on $\M_0$-topology we refer to \cite{hult:lindskog:2006a}. 

Before stating the main theorem,  we will  impose an additional condition that   $a_n=o(n)$ as $n\to\infty$. To see  the necessity of   this  assumption, suppose, to the contrary, that $a_n/n\to\infty$ as $n\to\infty$. Then,  
it trivially holds that  $L_{k,n}\big( [0,1]^d \times (-M,\infty] \big)=0$ a.s.~for large enough $n$ and  any $M>0$. By putting the assumption $a_n=o(n)$ as above, one can  exclude such triviality. 

The proofs of the results below are all  given in Section \ref{sec:proof.M0}. 

\begin{theorem}  \label{t:M0.L.kn}
In the above setting with $a_n = o(n)$, as $n\to\infty$, 
\begin{equation}  \label{e:M0.convergence}
b_n^{-1}\P (L_{k,n}\in \cdot ) \to \xi_k,   \ \ \text{in } \M_0, 
\end{equation}
where 
$$
\xi_k(\cdot) := \frac{1}{(k-1)!} \int_E \one \{ \delta_{(x,u)} \in \cdot \} \, e^{-u} \dif x\dif u. 
$$
\end{theorem}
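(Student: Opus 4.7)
The plan is to verify the defining criterion of $\mathcal M_0$-convergence, namely that for every $g\in\mathcal C_0$,
\[
b_n^{-1}\,\E\bigl[g(L_{k,n})\bigr]\ \longrightarrow\ \int g\,d\xi_k\ =\ \frac{1}{(k-1)!}\int_{[0,1]^d}\!\int_{-\infty}^{\infty} g(\delta_{(x,u)})\,e^{-u}\,du\,dx.
\]
Since $g$ vanishes in some vague neighborhood of the null measure, I choose $M>0$ so that $g(\eta)=0$ whenever $\eta(B)=0$, where $B:=[0,1]^d\times[-M,\infty]$; by a standard approximation I further reduce to $g(\eta)=h(\eta|_B)$ for some bounded continuous $h$. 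Setting $r_n:=((a_n-M)/(n\theta_d))^{1/d}$, the assumption $a_n=o(n)$ ensures that eventually $|B_{r_n}(x)|=\theta_d r_n^d$ on the torus, so that $n\theta_d R_k(x,\Pn)^d$ follows a $\Gamma(k,1)$ distribution.

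The central estimate is the second-factorial-moment bound
\[
\E\bigl[L_{k,n}(B)\bigl(L_{k,n}(B)-1\bigr)\bigr]=o(b_n),
\]
which via Markov's inequality immediately gives $\P(L_{k,n}(B)\geq 2)=o(b_n)$. By the multivariate Mecke formula the left-hand side equals
\[
n^2\iint \P\bigl(R_k(x,\Pn\cup\{x,y\})\geq r_n,\ R_k(y,\Pn\cup\{x,y\})\geq r_n\bigr)\,dx\,dy,
\]
and I would split the integral according to $\|x-y\|$. For $\|x-y\|>2r_n$ the two events decouple, contributing $O(b_n^2)=o(b_n)$. For $r_n<\|x-y\|\leq 2r_n$ the union $B_{r_n}(x)\cup B_{r_n}(y)$ has volume at least $\alpha_d\,\theta_d r_n^d$ with some $\alpha_d>1$, producing a super-exponential damping $e^{-\alpha_d a_n}$ that beats any polynomial prefactor. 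The delicate case is $\|x-y\|\leq r_n$: each event forces $\Pn(B_{r_n}(\cdot))\leq k-2$, and the union volume expands as $V(u)=\theta_d r_n^d\bigl(1+c_d(u/r_n)+O((u/r_n)^2)\bigr)$ for small $u/r_n$. The substitution $v=u/r_n$ converts the resulting integral into a Laplace-type integral yielding an extra factor $\sim a_n^{-d}$, giving a contribution of order $n a_n^{k-1-d}e^{-a_n}=b_n/a_n^d$.

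With this bound in hand, the identity $\one\{L_{k,n}(B)\geq 1\}=L_{k,n}(B)-L_{k,n}(B)\one\{L_{k,n}(B)\geq 2\}+\one\{L_{k,n}(B)\geq 2\}$, together with $g\equiv 0$ on $\{\eta(B)=0\}$, gives $\E[g(L_{k,n})]=\E[g(L_{k,n})\,L_{k,n}(B)]+o(b_n)$. Mecke's formula then produces
\[
\E[g(L_{k,n})\,L_{k,n}(B)]=n\int_{[0,1]^d}\E\bigl[g(L_{k,n}(\Pn\cup\{x\}))\,\one\{U'_x\geq -M\}\bigr]dx,
\]
where $U'_x:=n\theta_d R_k(x,\Pn)^d-a_n$. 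On the complement of the event ``some $Y\in\Pn$ satisfies $U^{(x)}_Y\geq -M$'' one has $L_{k,n}(\Pn\cup\{x\})|_B=\delta_{(x,U'_x)}$, and hence $g(L_{k,n}(\Pn\cup\{x\}))=g(\delta_{(x,U'_x)})$; the error of this replacement is bounded by $2\|h\|_\infty$ times exactly the factorial moment above, so it is $o(b_n)$. Since $U'_x$ has density $(a_n+u)^{k-1}e^{-(a_n+u)}/(k-1)!$ on $[-a_n,\infty)$, the surviving integral evaluates to
\[
\frac{b_n}{(k-1)!}\int_{[0,1]^d}\!\!\int_{-M}^{\infty} g(\delta_{(x,u)})\Bigl(1+\frac{u}{a_n}\Bigr)^{k-1}e^{-u}\,du\,dx,
\]
and dividing by $b_n$ and applying dominated convergence with integrable majorant $\|h\|_\infty(1+|u|)^{k-1}e^{-u}$ on $[-M,\infty)$ yields the desired limit $\int g\,d\xi_k$.

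The principal obstacle is the factorial-moment estimate in the regime $\|x-y\|\leq r_n$: a crude bound using only $V(u)\geq\theta_d r_n^d$ gives $O(b_n)$, not the required $o(b_n)$. One must exploit the first-order growth of the union volume as the two centres separate to extract the extra factor $a_n^{-d}$ via a Laplace-type analysis. The same bound also controls the Mecke-based error in the final step when $L_{k,n}(\Pn\cup\{x\})$ is replaced by the single-atom measure $\delta_{(x,U'_x)}$.
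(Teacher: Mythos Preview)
Your argument is correct and rests on the same geometric estimate as the paper's proof, but the overall architecture is genuinely different. The paper introduces an auxiliary Poisson point process $\zeta_{k,n}$ on $E_0$ with mean measure $b_n\,\Leb\otimes\tau_k$, splits the problem into $b_n^{-1}|\E[F(L_{k,n})]-\E[F(\zeta_{k,n})]|\to 0$ and $b_n^{-1}\E[F(\zeta_{k,n})]\to\xi_k(F)$, and handles the first part via a Kantorovich--Rubinstein bound obtained from Theorem~6.4 of Bobrowski--Schulte--Yogeshwaran (their quantities $E_1,E_2,E_3$). You instead work directly: the second factorial moment $\E[L_{k,n}(B)(L_{k,n}(B)-1)]=o(b_n)$ lets you peel off the event $\{L_{k,n}(B)\ge 2\}$, and a single application of Mecke reduces everything to the one-atom contribution whose density you compute explicitly. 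Your three-range decomposition of the factorial moment (far, intermediate, close) is exactly the content of the paper's $E_2$ and $E_{3,1},E_{3,2}$ bounds; in particular your Laplace-type extraction of the extra factor $a_n^{-d}$ in the regime $\|x-y\|\le r_n$ is the same calculation as their $E_{3,1}$, which they carry out via the lower bound $\Leb(B_{r_n}(z)\setminus B_{r_n}(x))\ge C^* r_n^{d-1}\|x-z\|$ and the substitution $\rho=\|x-z\|$.

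What each approach buys: yours is more self-contained and avoids both the intermediate Poisson process and the black-box KR-distance theorem; the paper's route is more modular, cleanly separating the Poisson approximation from the Poisson limit. One point in your write-up that deserves a citation rather than the phrase ``standard approximation'' is the reduction to test functions of the form $g(\eta)=h(\eta|_B)$; this is precisely the role played by Theorem~A.2 of Hult--Samorodnitsky in the paper (reducing to the functions $F_{U_1,U_2,\vep_1,\vep_2}$, which depend only on $\eta|_B$ for $B$ containing the supports of $U_1,U_2$). Without such a reduction the step $g(L_{k,n}(\Pn\cup\{x\}))=g(\delta_{(x,U'_x)})$ on $\{L_{k,n}(\Pn\cup\{x\})|_B=\delta_{(x,U'_x)}\}$ is not justified for arbitrary $g\in\mathcal C_0$.
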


Additionally, we  present an analogous result for the process $(L_{k,n}^\ms{B})_{n\ge1}$ as well. For a precise statement, however, we need to put a more stringent  condition on $(a_n)$ for the purpose of proving   \eqref{e:Fnvep.M0} in Section \ref{sec:proof.M0}. 

\begin{corollary}  \label{c:M0.LknB}
In the above setting with   $a_n=o(n^{1/3})$, as $n\to\infty$, 
$$
b_n^{-1} \P(L_{k,n}^\ms{B}\in \cdot) \to \xi_k, \ \ \text{ in } M_0. 
$$
\end{corollary}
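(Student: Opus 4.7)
The plan is to reduce Corollary~\ref{c:M0.LknB} to Theorem~\ref{t:M0.L.kn} via a coupling. Use the standard construction: let $(U_i)_{i\ge1}$ be i.i.d.~uniform on $[0,1]^d$ and $N$ an independent $\mathrm{Poisson}(n)$ variable, and set $\Pn := \{U_1,\dots,U_N\}$, $\B_n := \{U_1,\dots,U_n\}$, so that $|\Pn \triangle \B_n| = |N-n|$. Since $\M_0$-convergence on $M_p(E)$ is determined by functionals $f \in \mathcal C_0$ of the form $f(\eta) = h(\eta(g_1),\dots,\eta(g_m))$ with $g_i \in C_c^+(E)$ and $h$ bounded continuous vanishing at $0$, and any such $f$ depends only on the restriction of $\eta$ to a compact set $K \subset E$, which we may take as $K = [0,1]^d \times [-M, \infty]$ for some $M > 0$, it suffices, in view of Theorem~\ref{t:M0.L.kn}, to establish
$$\P\bigl(L_{k,n}|_K \ne L_{k,n}^\ms{B}|_K\bigr) = o(b_n).$$

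Set $r_n := \bigl((a_n - M)/(n\theta_d)\bigr)^{1/d}$, so that a point $X$ contributes to $L_{k,n}|_K$ (resp.~$L_{k,n}^\ms{B}|_K$) iff $R_k(X,\Pn) \ge r_n$ (resp.~$R_k(X,\B_n) \ge r_n$). The disagreement event is contained in the union of (i) $\Omega_n^c := \{|N - n| > n^{2/3}\}$; (ii) some point of $\Pn \triangle \B_n$ has $k$-NN distance $\ge r_n$ in its own configuration; (iii) some $X \in \Pn \cap \B_n$ with $\max\{R_k(X,\Pn),R_k(X,\B_n)\} \ge r_n$ satisfies $R_k(X,\Pn) \ne R_k(X,\B_n)$. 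A Chernoff bound yields $\P(\Omega_n^c) \le 2 \exp(-c n^{1/3})$, and since $a_n = o(n^{1/3})$ we get $e^{a_n - c n^{1/3}} \to 0$, so $\P(\Omega_n^c) = o(b_n)$. For (ii), the Binomial tail estimate $\P(R_k(U_1,\B_n) \ge r_n) = O(b_n/n)$ (and its Poisson analogue) bounds the expected count by $O(n^{2/3} \cdot b_n/n) = o(b_n)$.

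The delicate step is (iii). Working on $\{N \ge n\}$ (the opposite case being symmetric), $\Pn \setminus \B_n = \{U_{n+1},\dots,U_N\}$ is conditionally independent of $\B_n$ given $N$ and $R_k(X,\Pn) \le R_k(X,\B_n)$ for all $X \in \B_n$. For $X \in \B_n$ to trigger (iii), some $U_j$ with $j > n$ must land in $B_{R_k(X,\B_n)}(X)$, which has conditional probability at most $(N - n)\, \theta_d R_k(X,\B_n)^d$. A direct computation from the Binomial $k$-NN tail $\P(\theta_d R_k(U_1,\B_n)^d > (a_n + s)/n) \sim e^{-(a_n+s)}(a_n + s)^{k-1}/(k-1)!$ then yields
$$\E\bigl[\theta_d R_k(U_1,\B_n)^d\, \one\{R_k(U_1,\B_n) \ge r_n\}\bigr] = O(a_n b_n/n^2),$$
where the dominant contribution is $\theta_d r_n^d \cdot \P(R_k(U_1,\B_n) \ge r_n) \sim (a_n/n)(b_n/n)$. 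On $\Omega_n$, summing over the $n$ candidates $X$ gives expected count $O(n \cdot n^{2/3} \cdot a_n b_n/n^2) = O(a_n b_n/n^{1/3}) = o(b_n)$, invoking $a_n = o(n^{1/3})$ once more. Markov's inequality converts the three expected-count bounds into probability bounds, giving the coupling estimate, whence $b_n^{-1}|\E[f(L_{k,n}^\ms{B})] - \E[f(L_{k,n})]| \le 2 b_n^{-1}\|f\|_\infty \cdot o(b_n) \to 0$, and Theorem~\ref{t:M0.L.kn} closes the argument.

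The main obstacle is the tail computation underlying step~(iii): each point in $\Pn \triangle \B_n$ can disturb the $k$-NN distance of all $O(a_n)$ typical neighbors within a $k$-NN ball, so the naive combinatorial load on $\Omega_n$ is $n \cdot n^{2/3}$ problematic candidate pairs, and this only fits inside $b_n$ (which itself tends to $0$) after exploiting the $\theta_d R_k^d$ factor produced by the conditioning-on-independent-extra-points argument. The hypothesis $a_n = o(n^{1/3})$ then does double duty: it makes the Chernoff bound in~(i) exponentially smaller than $b_n$, and it provides the polynomial slack required to absorb the factor $a_n$ in~(iii).
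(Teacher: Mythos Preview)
Your argument is correct; the reduction to $\P(L_{k,n}|_K\ne L_{k,n}^\ms{B}|_K)=o(b_n)$ is exactly what the paper does (via the Hult--Samorodnitsky test-function class, which you should cite for the determining-class step), and your three-way decomposition with the direct coupling $\Pn=\{U_1,\dots,U_N\}$, $\B_n=\{U_1,\dots,U_n\}$ and threshold $|N-n|\le n^{2/3}$ goes through as written.

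The route, however, is different from the paper's. Instead of the natural coupling, the paper uses the \emph{sandwich} coupling already developed for Corollary~\ref{c:LDP.LknB}: with $F_{n,\vep}=\{\Pn^{(\vep a_n^{-1},\ms t)}\subset\B_n\subset\Pn^{(\vep a_n^{-1},\ms a)}\}$, one shows $b_n^{-1}\P(F_{n,\vep}^c)\to0$ (this is the only place $a_n=o(n^{1/3})$ enters) and then recycles verbatim the estimates $A_n'+B_n'\le C^*\vep b_n$ from the LDP de-Poissonization, finishing by $\vep\downarrow0$. Your approach is more self-contained and elementary---no auxiliary thinned/augmented processes, no $\vep\to0$ limit---at the cost of invoking $a_n=o(n^{1/3})$ twice (in~(i) and in~(iii)) rather than once. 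The paper's approach buys uniformity with the earlier corollary and isolates the role of $a_n=o(n^{1/3})$ cleanly in the single sandwich-probability estimate; yours makes the mechanism more transparent (step~(iii) shows explicitly that each of the $\le n^{2/3}$ extra points perturbs an expected $O(a_nb_n/n)$ worth of $k$-NN balls). Both arguments are tight in the sense that optimizing your threshold $t_n$ between $\sqrt{na_n}$ and $n/a_n$ recovers precisely the constraint $a_n^3=o(n)$.
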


Finally, certain asymptotic results on $(T_{k,n})_{n\ge1}$ in \eqref{e:def.T.kn}, as well as those on $(T_{k,n}^\ms{B})_{n\ge1}$, are presented as a corollary of the above results. This corollary gives the exact rate (up to the scale) of a probability that the number of Poisson (or binomial) points of degree at most $k$ becomes non-zero. 
\begin{corollary}  \label{c:M0.application}
$(i)$ If $a_n=o(n)$, then 
$$
b_n^{-1}\P(T_{k,n}\ge1) \to \alpha_k  \ \ \ n\to\infty, 
$$
where $\alpha_k$ is given in Corollary \ref{c:LDP.application}. \\
$(ii)$ If $a_n=o(n^{1/3})$, then 
$$
b_n^{-1}\P(T_{k,n}^\ms{B}\ge1) \to \alpha_k,  \ \ \ n\to\infty. 
$$
\end{corollary}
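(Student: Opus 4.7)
The plan is to realize $\{T_{k,n} \ge 1\}$ as an event of the form $\{L_{k,n} \in A\}$ for an explicit set $A \subset M_p(E)$, and then apply a Portmanteau-type theorem for $\M_0$-convergence to Theorem \ref{t:M0.L.kn}. First I would establish the identity
\begin{equation*}
T_{k,n} \;=\; L_{k,n}\!\bigl([0,1]^d \times (s_0,\infty]\bigr),
\end{equation*}
which follows from the equivalences $\Pn(B_{r_n(s_0)}(X)) \le k \Leftrightarrow \Pn(B_{r_n(s_0)}(X)\setminus\{X\}) \le k-1 \Leftrightarrow R_k(X,\Pn) > r_n(s_0) \Leftrightarrow n\theta_d R_k(X,\Pn)^d - a_n > s_0$, using \eqref{e:def.k.NN.dist}, $X \in \Pn$, and the definition \eqref{e:def.rn.s0} of $r_n(s_0)$. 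Consequently $\{T_{k,n}\ge 1\} = \{L_{k,n}\in A\}$, where
\begin{equation*}
A := \bigl\{\eta \in M_p(E): \eta\bigl([0,1]^d\times(s_0,\infty]\bigr)\ge 1\bigr\}.
\end{equation*}

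Next I would verify that $A$ is a $\xi_k$-continuity set bounded away from $\emptyset$. Since the second coordinate in $E = [0,1]^d \times (-\infty,\infty]$ is compactified only at $+\infty$, the set $K := [0,1]^d\times[s_0,\infty]$ is compact in $E$, and any $\eta \in A$ satisfies $\eta(K)\ge 1$; this shows that $A$ stays outside a fixed vague-metric neighborhood of the null measure $\emptyset$. Moreover, $\eta \mapsto \eta([0,1]^d\times(s_0,\infty])$ is lower semicontinuous in the vague topology, so $A$ is open, and $\partial A$ is contained in the collection of point measures charging the hyperplane $[0,1]^d\times\{s_0\}$. Because $\xi_k$ is the pushforward of $(k-1)!^{-1}e^{-u}\, dx\, du$ on $E$ under $(x,u)\mapsto \delta_{(x,u)}$, the set $\partial A$ is $\xi_k$-null.

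Applying the Portmanteau-type theorem for $\M_0$-convergence (see, e.g., Theorem~2.4 of \cite{hult:lindskog:2006a}) to Theorem \ref{t:M0.L.kn} then yields
\begin{equation*}
b_n^{-1}\P(T_{k,n}\ge 1) \;=\; b_n^{-1}\P(L_{k,n}\in A) \;\longrightarrow\; \xi_k(A) \;=\; \frac{1}{(k-1)!}\int_{[0,1]^d}\!\int_{s_0}^\infty\! e^{-u}\,du\,dx \;=\; \frac{e^{-s_0}}{(k-1)!} \;=\; \alpha_k,
\end{equation*}
which proves $(i)$. Part $(ii)$ follows by the identical argument with Corollary \ref{c:M0.LknB} in place of Theorem \ref{t:M0.L.kn}, and the stronger hypothesis $a_n = o(n^{1/3})$ is inherited directly from that corollary. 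The only delicate point is the verification that $A$ is a $\xi_k$-continuity set bounded away from $\emptyset$; this is the step I would expect to require the most care, but it ultimately reduces to the elementary facts that $[s_0,\infty]$ is compact in the one-point compactification $(-\infty,\infty]$ and that the hyperplane $\{u = s_0\}$ is Lebesgue null.
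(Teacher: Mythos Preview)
Your argument is correct and takes a slightly different route from the paper. The paper first restricts the state space from $E$ to $E_0 = [0,1]^d \times (s_0,\infty]$, then invokes the continuous mapping theorem for $\M_0$-convergence (Theorem~2.5 in \cite{hult:lindskog:2006a}) with the counting map $V(\rho) = \rho(E_0)$ into $\bbn$ equipped with the discrete topology, and finally integrates against the indicator $\one_{[1,\infty)}$, which is continuous and bounded on $\bbn$ and vanishes at the origin $0$. You instead stay on $E$ and appeal directly to the Portmanteau theorem (Theorem~2.4 in \cite{hult:lindskog:2006a}) after checking that $A$ is open, bounded away from $\emptyset$, and a $\xi_k$-continuity set. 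Your route is arguably more transparent because it avoids the state-space restriction, at the cost of the boundary verification $\xi_k(\partial A)=0$; the paper's route hides that verification in the discreteness of $\bbn$. One minor technical slip: the identity $T_{k,n} = L_{k,n}\big([0,1]^d\times(s_0,\infty]\big)$ fails on the event $\{1\le |\Pn|\le k\}$, where $L_{k,n}=\emptyset$ by definition while $T_{k,n}=|\Pn|\ge 1$; however $b_n^{-1}\P(|\Pn|\le k)=O(n^{k} e^{-n}/b_n)\to 0$ under $a_n=o(n)$, so this does not affect the limit.
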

\medskip

\section{Proofs}  \label{sec:proofs}

\subsection{Proofs of Theorem \ref{t:LDP.L.kn}, Corollary \ref{c:LDP.LknB}, and Corollary \ref{c:LDP.application}}  \label{sec:proof.LDP}

First, let us generalize the radius \eqref{e:def.rn.s0} by 
$$
r_n(u) := \Big(\frac{a_n+u}{n\theta_d}\Big)^{1/d}, \  \ u\in \R. 
$$

For $x\in [0,1]^d$ and $\omega \in M_p\big( [0,1]^d \big)$, define the following functions: 
\begin{align*}
f(x,\omega) &:=  n\th_dR_k(x, \omega)^d-a_n, \quad\text{ and }\quad	g(x,\omega) := \one \{ f(x, \omega)>s_0 \}.
\end{align*}

{Then, the process \eqref{e:def.L.kn} that is restricted here to the space $M_p(E_0)$, can be reformulated as}
\begin{equation}  \label{e:def.Lkn2}
	L_{k,n} = \begin{cases}
	 \sum_{X\in \Pn}  g(X, \Pn)\, \delta_{(X, f(X,\Pn))} & \text{ if } |\Pn| > k, \\
	 \emptyset & \text{ if } |\Pn| \le k. 
	 \end{cases}
\end{equation}
Next, the unit cube $[0,1]^d$ is partitioned into smaller cubes $Q_1,\dots,Q_{b_n}$, so that $\Leb(Q_\ell)=b_n^{-1}$ for all $\ell\in\{ 1,\dots,b_n \}$. 
Here, it is assumed, without loss of generality, that $b_n$ takes only positive integers for all $n\ge1$. To avoid unnecessary technicalities, we will put the same assumption on many of the sequences and functions throughout  the proof. 
Fix  a sequence $w_n\to \infty$ with $w_n=o(a_n)$ as $n\to\infty$. For each $\ell\in\{  1,\dots,b_n \}$, define the ``boundary part" of $Q_\ell$ by
$$
M_\ell :=  \big\{ x\in Q_\ell: \inf_{y\in \partial Q_\ell} \| x-y\|\le r_n(w_n) \big\}, 
$$
while the ``internal region" of $Q_\ell$ is  given  as $K_\ell := Q_\ell \setminus M_\ell$. 

We now consider the process 
\begin{equation}  \label{e:def.eta.kn}
\eta_{k,n} := \sum_{\ell=1}^{b_n} \sum_{X\in \Pnr_{K_\ell} }g\big(X, \Pnr_{Q_\ell}\big) \, \delta_{(X, f(X, \Pnr_{Q_\ell}))}  \in M_p(\ois ), 
\end{equation}
where $\Pnr_{K_\ell}$ (resp.~$\Pnr_{Q_\ell}$) represents the Poisson point process restricted to $K_\ell$ (resp.~$Q_\ell$). Setting up  a ``blocked" point process as in \eqref{e:def.eta.kn} is  a standard approach  in the literature (see, e.g., \cite{seppalainen:yukich:2001, schreiber:yukich:2005}). Clearly, the process \eqref{e:def.eta.kn} is different from \eqref{e:def.L.kn}. For example, \eqref{e:def.eta.kn} removes all the points $X\in \Pn$ lying within the distance of $r_n(w_n)$ from the boundary of $(Q_\ell)_{\ell=1}^{b_n}$, whereas those points are possibly counted by \eqref{e:def.L.kn}. Even when the center $X\in \Pn$ is chosen from the inside of $K_\ell$, i.e., $X\in \Pnr_{K_\ell}$,  the processes \eqref{e:def.L.kn} and  \eqref{e:def.eta.kn} may  exhibit different $k$-nearest neighbor balls, whenever  $R_k(X,\Pn)\neq R_k(X, \Pnr_{Q_\ell})$. Despite such differences, it is justified later in  Propositions \ref{p:exp.neg.eta.etap} -- \ref{p:exp.neg.etap.Lp}  that the process  \eqref{e:def.eta.kn} can be used to approximate the large deviation behavior of  \eqref{e:def.L.kn}. 

For later analyses, it is convenient to express \eqref{e:def.eta.kn} as a superposition of i.i.d.~point processes on $E_0$,  which themselves are transformed by  some  homeomorphisms. 
For each $\ell\in\{ 1,\dots,b_n \}$, let $\kln: [0,1]^d\to Q_\ell$ be the homeomorphism defined by $\kln (x)=b_n^{-1/d}x+z_{\ell,n}$, where  $z_{\ell, n}$ is  a lower-left corner of $Q_\ell$. Further, define $\ktln: \ois \to Q_\ell \times (s_0,\infty]$ by $\ktln (x,u)=\big( \kln(x),u \big)$.

Using the homeomorphism $\ktln$, one can express $\eta_{k,n}$ as 
\begin{equation}  \label{e:def.eta.kn2}
\eta_{k,n} = \sum_{\ell=1}^{b_n} \eta_{k,n}^{(\ell)} \circ \ktln^{-1}, 
\end{equation}
where 
$$
\eta_{k,n}^{(\ell)} := \sum_{X\in \Pnr_{K_\ell}} g\big(X, \Pnr_{Q_\ell}\big) \, \delta_{(\kln^{-1}(X),\, f(X, \Pnr_{Q_\ell}))} \in M_p(\ois). 
$$
Due to the spatial independence and homogeneity of $\Pn$, $(\eta_{k,n}^{(\ell)})_{\ell\ge 1}$ constitutes a sequence of i.i.d.~point processes on $\ois$.  

Next, let $(\zeta_k^{(\ell)})_{\ell \ge 1}$ be a collection of i.i.d.~Poisson point processes on $\ois$ with intensity $\Leb \otimes \tau_k$.
Then, the proposition below claims that for each $\ell\ge1$, the law of $\eta_{k,n}^{(\ell)}$ converges to the law of $\zeta_k^{(\ell)}$ as $n\to\infty$, in terms of the \emph{Kantorovich-Rubinstein distance}. Recall that the Kantorovich-Rubinstein distance between the distributions of two point processes $\xi_i$, $i=1,2$, is defined as 
\begin{equation}  \label{e:def.KR.dist}
d_{\ms{KR}} \big( \mathcal L(\xi_1), \mathcal L(\xi_2)\big) := \sup_{h} \big| \E\big[ h(\xi_1) \big] - \E\big[ h(\xi_2) \big]  \big|, 
\end{equation}
where $\mathcal L(\xi_i)$ is a probability law of $\xi_i$, and 
$h$ is taken over all measurable $1$-Lipschitz functions with respect to the total variation distance on the space of point measures; see  \cite{bobrowski:schulte:yogeshwaran:2021} for more information on the Kantorovich-Rubinstein distance. As a related notion, the \emph{total variation distance} between two measures $\mu_1$ and $\mu_2$ on $E_0$ is defined as 
$$
d_{\ms{TV}} (\mu_1, \mu_2) :=\sup_{A\subset E_0} \big| \mu_1(A)-\mu_2(A) \big|. 
$$

As a final remark, we propose one key observation:  for $x\in [0,1]^d$, $\omega \in M_p\big( [0,1]^d \big)$ with $x\in \omega$, and $u\in \R$, the following conditions are equivalent.  
\begin{equation}  \label{e:K.and.Pn1}
f(x,\omega) >u \ \ \Leftrightarrow \ \ R_k(x,\omega) >r_n(u)\ \ \Leftrightarrow \ \ \omega \big( B_{r_n(u)}(x) \big) \le k. 
\end{equation}

Throughout the proof, $C^*$ denotes a generic, positive constant, which is independent of $n$ but may vary from one line to another or even within the lines. 

\begin{proposition}  \label{p:KR.conv1}
For every $\ell\ge1$, 
\begin{equation}  \label{e:KR.dist.eta.zeta}
d_{\ms{KR}} \big( \mathcal L(\eta_{k,n}^{(\ell)}), \mathcal L(\zeta_k^{(\ell)})\big) \to 0, \ \ \text{as } n\to\infty.  
\end{equation}
\end{proposition}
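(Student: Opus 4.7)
The strategy is to apply the Kantorovich--Rubinstein Poisson approximation bound of \cite{bobrowski:schulte:yogeshwaran:2021} to $\eta_{k,n}^{(\ell)}$. Their general bound decomposes $d_{\ms{KR}}(\mathcal L(\eta_{k,n}^{(\ell)}),\mathcal L(\zeta_k^{(\ell)}))$ into a ``mean-measure'' term, measuring how close the first intensity of $\eta_{k,n}^{(\ell)}$ is to $\Leb\otimes\tk$, and a ``second-order'' term encoding short-range dependence between two rare $k$-NN events. The proof of \eqref{e:KR.dist.eta.zeta} then reduces to checking that each of these contributions vanishes as $n\to\infty$.

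For the mean-measure term, I would use the Mecke formula together with the equivalence in \eqref{e:K.and.Pn1} to compute, for a rectangle $A_1\times(u,u']\subset\ois$ with $s_0\le u\le u'\le w_n$,
\[
\E\bigl[\eta_{k,n}^{(\ell)}(A_1\times(u,u'])\bigr]
=n\int_{\kln(A_1)\cap K_\ell}\P\bigl(\Pnr_{Q_\ell}(B_{r_n(u)}(x))\le k-1,\,\Pnr_{Q_\ell}(B_{r_n(u')}(x))\ge k\bigr)\,dx.
\]
For $x\in K_\ell$ and $u\le w_n$, the ball $B_{r_n(u)}(x)$ sits inside $Q_\ell$ by the very construction of $K_\ell$, so $\Pnr_{Q_\ell}(B_{r_n(u)}(x))$ is a pure Poisson variable with mean $a_n+u$. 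The Poisson tail asymptotic $\P(\mathrm{Poisson}(a_n+u)\le k-1)\sim e^{-(a_n+u)}a_n^{k-1}/(k-1)!$, combined with $|K_\ell|=b_n^{-1}(1-o(1))$ (since $r_n(w_n)/b_n^{-1/d}\to0$) and the identity $na_n^{k-1}e^{-a_n}=b_n$, yields the limit $\Leb(A_1)\,\tk((u,u'])$. Marks exceeding $w_n$ contribute negligibly to the total variation error since the probability of such a rare $k$-NN event decays exponentially in $w_n\to\infty$.

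For the second-order term, the integrand is the joint probability that two distinct points $x,y\in K_\ell$ both produce $k$-NN marks above $s_0$. I would split the $K_\ell\times K_\ell$ integration at the threshold $\|x-y\|=2r_n(w_n)$: on the far region the balls $B_{r_n(u)}(x)$ and $B_{r_n(u)}(y)$ are disjoint and the corresponding count events are independent, so the joint probability factorizes and cancels against the product of marginal means built into the interaction; on the close region one uses that the \emph{union} of the two $k$-NN balls carries Poisson mass of order $a_n$, which supplies an extra $e^{-c\, a_n}$ factor dominating the $O(a_n/n)$ volume of the close-pair diagonal.

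The main obstacle is precisely this close-pair estimate: bounding the joint probability by the marginal $k$-NN probability alone would yield a close-pair contribution of order $a_n\to\infty$, so one must exploit the joint Poisson tail in the union of the two balls rather than the single-ball tail. This is the delicate step that the main theorem of \cite{bobrowski:schulte:yogeshwaran:2021} is designed to handle, making a direct invocation of their general result the cleanest route to \eqref{e:KR.dist.eta.zeta}.
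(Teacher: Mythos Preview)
Your overall plan is the paper's: apply Theorem~6.4 of \cite{bobrowski:schulte:yogeshwaran:2021} with stopping set $\mathcal S(x,\omega)=B_{R_k(x,\omega)}(x)$ and localizing ball $S_x=B_{r_n(w_n)}(x)$, then verify that the intensity error $d_{\ms{TV}}\bigl(\E[\eta_{k,n}^{(\ell)}(\cdot)],\Leb\otimes\tk\bigr)$ and the three residual terms $E_1,E_2,E_3$ all vanish. Your intensity computation and your remark that marks above $w_n$ are exponentially rare (this is exactly the $E_1$ estimate) are correct and match the paper. One structural correction: in the BSY bound the interaction terms $E_2$ and $E_3$ already carry the indicator $\one\{S_x\cap S_z\ne\emptyset\}\le\one\{\|x-z\|\le 2r_n(w_n)\}$, so your ``far-region cancellation'' is vacuous and $E_2$, $E_3$ must each be shown to tend to $0$ separately, not to cancel one another.

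The genuine gap is in your close-pair heuristic for $E_3$. The ``extra $e^{-c\,a_n}$ from the union of the two balls'' is available only when $\|x-z\|>r_n(s_0)$, so that $\Leb\bigl(B_{r_n(s_0)}(x)\cup B_{r_n(s_0)}(z)\bigr)\ge(1+c)(a_n+s_0)/n$ for some $c>0$; this is precisely the paper's $E_{3,2}$ piece. On the inner shell $\|x-z\|\le r_n(s_0)$ the union is barely larger than a single ball and no uniform exponential gain exists---your bound would leave a contribution of order $a_n\to\infty$, exactly the obstruction you yourself flag. The paper handles this inner shell ($E_{3,1}$) with two further ingredients you do not mention: first, $z\in B_{r_n(s_0)}(x)$ forces $\Pn\bigl(B_{r_n(s_0)}(x)\bigr)\le k-2$ rather than $k-1$, gaining a factor $a_n^{-1}$; second, the crescent lower bound $\Leb\bigl(B_{r_n(s_0)}(z)\setminus B_{r_n(s_0)}(x)\bigr)\ge C\,r_n(s_0)^{d-1}\|x-z\|$ is fed into a polar-coordinate integration in $\|x-z\|$, producing a polynomial decay of order $a_n^{-d}$ (not exponential). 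Invoking the BSY theorem does not bypass this computation: the theorem only reduces the problem to bounding $E_3$, and the crescent estimate is still required to close it.
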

\begin{proof}
The proof is based on \cite[Theorem 6.4]{bobrowski:schulte:yogeshwaran:2021}. Before applying this theorem, we need some preliminary works. First, define for $x\in [0,1]^d$ and   $\omega\in M_p\big([0,1]^d\big)$, 
$$
\mathcal S(x,\omega) := B_{R_k(x,\omega)}(x); 
$$
then, $f$ and $g$ are \emph{localized} to $\mathcal S$. Namely, for every $x\in \omega$ and all $S\supset \mathcal S(x,\omega)$, we have $g(x,\omega)=g(x,\omega\cap S)$, and  also,  $f(x,\omega) = f(x,\omega\cap S)$ if $g(x,\omega)=1$. Moreover, $\mathcal S(x,\omega)$ is a \emph{stopping set}; that is, for every compact $S\subset [0,1]^d$, 
$$
\big\{ \omega: B_{R_k(x,\omega)}(x)\subset S  \big\} = \big\{ \omega: B_{R_k(x,\omega\cap S)}(x)\subset S  \big\}. 
$$
Finally, we set $S_x :=B_{r_n(w_n)}(x)$ for $x\in \QmM$.

According to   \cite[Theorem 6.4]{bobrowski:schulte:yogeshwaran:2021},  \eqref{e:KR.dist.eta.zeta} can be obtained as a direct consequence of the following conditions. First, one needs to show that 
\begin{equation}  \label{e:TV.dist.mean.measures}
d_{\ms{TV}} \Big(\E \big[ \eta_{k,n}^{(\ell)}(\cdot) \big], \, \Leb \otimes \tau_k\Big) \to 0, \ \ \ n\to\infty,
\end{equation}
where $\E \big[ \eta_{k,n}^{(\ell)}(\cdot) \big]$ denotes  the intensity measure of $\eta_{k,n}^{(\ell)}$. 
In addition to \eqref{e:TV.dist.mean.measures}, we also have to show that  as $n\to\infty$, 
\begin{equation}  \label{e:E1}
E_1 := 2n\int_{\QmM} \E \big[  g(x,\Pnr_{Q_\ell}+\delta_x) \, \one \big\{  \mathcal S (x,\Pnr_{Q_\ell}+\delta_x) \not\subset S_x \big\} \big]\dif x\to0, 
\end{equation}
\begin{equation}  \label{e:E2}
E_2 := 2n^2 \int_{\QmM}\int_{\QmM}  \hspace{-.2cm}\one \{ S_x \cap S_z \neq \emptyset \} \,\E \big[  g(x,\Pnr_{Q_\ell}+\delta_x) \big]  \E \big[  g(z,\Pnr_{Q_\ell}+\delta_z) \big] \dif x \dif z \to 0, 
\end{equation}
and 
\begin{equation}  \label{e:E3}
	E_3 := 2n^2 \int_{\QmM}\int_{\QmM} \hspace{-.2cm}\one \{ S_x \cap S_z \neq \emptyset \} \,\E \big[  g(x,\Pnr_{Q_\ell}+\delta_x+\delta_z) \,  g(z,\Pnr_{Q_\ell}+\delta_x+\delta_z) \big] \dif x \dif z \to 0. 
\end{equation}
Our goal in the sequel  is  to prove \eqref{e:TV.dist.mean.measures} -- \eqref{e:E3}. 
\medskip

\textit{Proof of \eqref{e:TV.dist.mean.measures}}: We begin with  calculating   the measure $\E \big[ \eta_{k,n}^{(\ell)}(\cdot) \big]$ more explicitly. 
	For $A \subset \oi$ and $u>s_0$, by \eqref{e:K.and.Pn1} and the Mecke formula for Poisson point processes (see, e.g., Chapter 4 in \cite{last:penrose:2017}), together with the fact that $\Pn(Q_\ell)$ is Poisson distributed with mean $n\Leb(Q_\ell)=nb_n^{-1}$, 
\begin{align*}
	 \E\big[ \eta_{k,n}^{(\ell)}\big(A \ti (u,\infty)\big) \big] &= \E\Big[ \sum_{X\in \Pnr_{\QmM}} \one \big\{\k_{\ell, n}^{-1}(X) \in A, \, f(X, \Pnr_{Q_\ell}) >u\big\} \Big] \\
&=\E\Big[ \sum_{X\in\Pnr_{\QmM}} \one \big\{ \k_{\ell, n}^{-1}(X) \in A,\,\Pnr_{Q_\ell} \big( B_{r_n(u)}(X) \big) \le k\big\}  \Big] \\
&=\E\Big[ \sum_{X\in\Pnr_{\QmM}} \one \big\{ \k_{\ell, n}^{-1}(X) \in A,\,\Pn \big( B_{r_n(u)}(X) \big) \le k\big\}  \Big] \\
	&=nb_n^{-1}\P \Big( \kln^{-1} (Y) \in A\setminus \kln^{-1}(M_\ell), \, (\Pn +\delta_Y) \big( B_{r_n(u)}(Y) \big)\le k \Big), 
\end{align*}
where $Y$ is a uniform random variable on $[0,1]^d$, independent of $\Pn$. 
At the third equality above, we have dropped the restriction of $\Pn$, i.e., $\Pnr_{Q_\ell}=\Pn$, because  $B_{r_n(w_n)}(X)\subset Q_\ell$ for all $X\in \Pnr_{K_\ell}$. 
By the  conditioning on $Y$, the last expression equals 
\begin{align*}
&nb_n^{-1}\E \Big[  \one \big\{  \kln^{-1}(Y)\in A \setminus \kln^{-1}(M_\ell) \big\}\, \P \big( \Pn (B_{r_n(u)}(Y))\le k-1 \, \big|\, Y \big) \Big]  \\
&=nb_n^{-1} \Leb \big(  A \setminus \kln^{-1}(M_\ell)  \big)\sum_{i=0}^{k-1} e^{-(a_n+u)} \frac{(a_n+u)^i}{i!}. 
\end{align*}
It thus turns out that $\E \big[ \eta_{k,n}^{(\ell)}(\cdot) \big]$ has the density 
$$
q_k(x,u) := nb_n^{-1} \one \big\{ x\notin \kln^{-1}(M_\ell) \big\}\, \frac{e^{-(a_n+u)} (a_n+u)^{k-1}}{(k-1)!}, \ \ x\in [0,1]^d, \, u >s_0. 
$$
Therefore, we have 
\begin{align*}
	d_{\ms{TV}} \Big(\E \big[ \eta_{k,n}^{(\ell)}(\cdot) \big], \, \Leb \otimes \tau_k  \Big) &\le \int_{E_0} \big|\,  q_k(x,u)-\frac{e^{-u}}{(k-1)!} \, \big| \d x \d u \\
&\le \Leb \big( \kln^{-1}(M_\ell) \big)\, \frac{e^{-s_0}}{(k-1)!} + \int_{s_0}^\infty \Big| \,  \Big(1+\frac{u}{a_n}  \Big)^{k-1}-1  \Big| \, e^{-u}\d u. 
\end{align*}
The first  term above tends to $0$ as $n\to\infty$, because 
$$
\Leb \big( \kln^{-1}(M_\ell) \big) =b_n \Leb (M_\ell) = 1-\big( 1-b_n^{1/d} r_n(w_n) \big)^d \to 0, \ \ \ n\to\infty, 
$$
while the second term  vanishes by the dominated convergence theorem. 
\medskip

\textit{Proof of \eqref{e:E1}}: It follows from \eqref{e:K.and.Pn1} that 
\begin{align*}
\one \big\{ \mathcal S (x,\Pnr_{Q_\ell}+\delta_x) \not\subset S_x \big\} &= \one \big\{ B_{R_k(x, \Pnr_{Q_\ell}+\delta_x)}(x)  \supset B_{r_n(w_n)}(x) \big\} \\
&= \one \big\{  (\Pnr_{Q_\ell}+\delta_x) \big( B_{r_n(w_n)}(x) \big)\le k \big\}\\
&= \one \big\{ \Pnr_{Q_\ell}\big( B_{r_n(w_n)}(x) \big) \le k-1 \big\}\\
&= \one \big\{ \Pn\big( B_{r_n(w_n)}(x) \big) \le k-1 \big\}. 
\end{align*}
At the fourth equality above, we have  dropped the restriction of $\Pn$, due to the fact that $B_{r_n(w_n)}(x)\subset Q_\ell$ for all $x\in K_\ell$. 
Now, as $n\to\infty$, 
\begin{align*}
E_1 &\le 2n \int_{\QmM} \P \Big( \Pn \big( B_{r_n(w_n)} (x)\big) \le k-1 \Big)\dif x =2n \,\Leb(\QmM) \sum_{i=0}^{k-1} e^{-(a_n+w_n)} \frac{(a_n+w_n)^i}{i!} \\
&\le 2e^{-w_n} \sum_{i=0}^{k-1} \frac{(a_n+w_n)^i}{i! a_n^{k-1}} \le C^*e^{-w_n}\to 0.  
\end{align*}
\medskip

\textit{Proof of \eqref{e:E2}}: One can see that for $x,z\in K_\ell$,
\begin{equation}  \label{e:Sx.Sz.indicator}
\one \{ S_x \cap S_z \neq \emptyset \} \le \one \big\{ \| x-z\| \le 2r_n(w_n) \big\}, 
\end{equation}
and from \eqref{e:K.and.Pn1}, 
$$
\E \big[ g(x,\Pnr_{Q_\ell}+\delta_x) \big] = \P \Big( \Pn \big( B_{r_n(s_0)} (x)\big) \le k-1 \Big) =\sum_{i=0}^{k-1} e^{-(a_n+s_0)}\frac{(a_n+s_0)^i}{i!}. 
$$
Therefore, as $n\to\infty$, 
\begin{align*}
E_2 &\le 2n^2 \Big\{  \sum_{i=0}^{k-1} e^{-(a_n+s_0)}\frac{(a_n+s_0)^i}{i!}\Big\}^2 \int_{Q_\ell}\int_{Q_\ell} \one \big\{ \| x-z\| \le 2r_n(w_n) \big\} \dif x \dif z \\
&\le C^*b_n^2  \int_{Q_\ell}\int_{Q_\ell} \one \big\{ \| x-z\| \le 2r_n(w_n) \big\} \dif x \dif z \\
&\le C^*b_n^2 \Leb(Q_\ell) r_n(w_n)^d = C^*b_nr_n(w_n)^d\to 0. 
\end{align*}
\medskip

\textit{Proof of \eqref{e:E3}}: It follows from \eqref{e:K.and.Pn1}  and \eqref{e:Sx.Sz.indicator} that   $E_3$ can be split into two terms: 
\begin{equation}  \label{e:E3.split}
\begin{split}
E_3 &\le 2n^2 \int_{\QmM}\int_{\QmM}\one \big\{ \| x-z\| \le 2r_n(w_n) \big\}  \\
&\qquad \qquad \qquad \times \P \Big( (\Pn+\delta_z) \big( B_{r_n(s_0)}(x) \big) \le k -1, \,   (\Pn+\delta_x) \big( B_{r_n(s_0)}(z) \big) \le k -1\Big) \dif x \dif z \\
&= 2n^2 \int_{\QmM}\int_{\QmM}\one \big\{ \| x-z\| \le r_n(s_0) \big\}  \\
&\qquad \qquad \qquad \times \P \Big( (\Pn+\delta_z) \big( B_{r_n(s_0)}(x) \big) \le k -1, \,   (\Pn+\delta_x) \big( B_{r_n(s_0)}(z) \big) \le k -1\Big) \dif x \dif z \\
&\quad + 2n^2 \int_{\QmM}\int_{\QmM}\one \big\{r_n(s_0) < \| x-z\| \le 2r_n(w_n) \big\}  \\
&\qquad \qquad \qquad \times \P \Big( (\Pn+\delta_z) \big( B_{r_n(s_0)}(x) \big) \le k -1, \,   (\Pn+\delta_x) \big( B_{r_n(s_0)}(z) \big) \le k -1\Big) \dif x \dif z \\
&=: E_{3,1}+E_{3,2}. 
\end{split}
\end{equation}
For $E_{3,1}$, if $\| x-z\|\le r_n(s_0)$ with $x,z \in \QmM$, then $(\Pn+\delta_z) \big( B_{r_n(s_0)}(x) \big) = \Pn \big( B_{r_n(s_0)}(x) \big)+1$. Because of the spatial independence of $\Pn$, 
\begin{align}
E_{3,1} &\le 2n^2 \int_{\QmM}\int_{\QmM} \one \big\{ \| x-z \|\le r_n(s_0) \big\}\, \P\Big(\Pn \big( B_{r_n(s_0)}(x) \big) \le k-2  \Big)  \label{e:E31} \\
&\qquad \qquad \qquad \qquad \qquad   \times \P\Big(\Pn \big( B_{r_n(s_0)}(z)\setminus B_{r_n(s_0)}(x) \big) \le k-2  \Big)\dif x \dif z. \notag 
\end{align}
Then, it is easy to see that 
$$
\P\Big(\Pn \big( B_{r_n(s_0)}(x) \big) \le k-2  \Big) \le C^*a_n^{k-2}e^{-a_n}, 
$$
and also, 
$$
\P\Big(\Pn \big( B_{r_n(s_0)}(z)\setminus B_{r_n(s_0)}(x) \big) \le k-2  \Big) \le C^*e^{-\frac{n}{2}\Leb(B_{r_n(s_0)}(z)\setminus B_{r_n(s_0)}(x))}. 
$$
Notice that 
$$
\Leb\big(B_{r_n(s_0)}(z)\setminus B_{r_n(s_0)}(x)\big) \ge C^*r_n(s_0)^{d-1}\| x-z \|, 
$$
whenever $\|x-z\| \le 2r_n(s_0)$ (see Equ.~(7.5) in \cite{penrose:goldstein:2010}); so, we have
$$
\P\Big(\Pn \big( B_{r_n(s_0)}(z)\setminus B_{r_n(s_0)}(x) \big) \le k-2  \Big) \le C^*e^{-\frac{n}{2}C^*r_n(s_0)^{d-1}\| x-z \|}. 
$$
Referring these bounds back to \eqref{e:E31}, 
\begin{align*}
E_{3,1} &\le C^*n^2 \int_{Q_\ell}\int_{\R^d}a_n^{k-2}e^{-a_n} e^{-\frac{n}{2}C^*r_n(s_0)^{d-1}\| x-z \|} \dif x \dif z \\
&=C^*n^2a_n^{k-2}e^{-a_n} b_n^{-1} \int_0^\infty e^{-\frac{n}{2}C^*r_n(s_0)^{d-1}\rho} \rho^{d-1}\dif \rho \\
&=  \frac{C^*n^2a_n^{k-2}e^{-a_n} b_n^{-1} }{\big( nr_n(s_0)^{d-1} \big)^d} = C^*\Big( 1+\frac{s_0}{a_n} \Big)\, \frac{1}{(a_n+s_0)^d} \to 0, \ \ \ n\to\infty. 
\end{align*}
By the spatial independence of $\Pn$, 
\begin{align*}
E_{3,2} &\le 2n^2 \int_{\QmM}\int_{\QmM} \P\Big( \Pn\big( B_{r_n(s_0)}(x) \big)\le k-1 \Big) \P\Big( \Pn\big( B_{r_n(s_0)}(z)\setminus B_{r_n(s_0)}(x)   \big)\le k-1 \Big) \\
&\qquad \qquad \qquad \qquad \qquad \qquad \times \one \big\{ r_n(s_0) < \| x-z\| \le 2r_n(w_n) \big\} \dif x \dif z 
\end{align*}
Then, 
$$
\P\Big( \Pn\big( B_{r_n(s_0)}(x) \big)\le k-1 \Big) \le C^*a_n^{k-1} e^{-a_n}. 
$$
Moreover, if $\| x-z \|>r_n(s_0)$ with $x,z\in \QmM$, then 
$$
n\, \Leb \big(  B_{r_n(s_0)}(z)\setminus B_{r_n(s_0)}(x)  \big) \ge \frac{n}{2}\, \Leb \big( B_{r_n(s_0)}(z) \big) = \frac{a_n+s_0}{2}, 
$$
from which we have 
$$
\P\Big( \Pn\big( B_{r_n(s_0)}(z)\setminus B_{r_n(s_0)}(x)   \big)\le k-1 \Big) \le \sum_{i=0}^{k-1} e^{-\frac{a_n+s_0}{2}} \frac{(a_n+s_0)^i}{i!} \le C^*a_n^{k-1}e^{-\frac{a_n}{2}}. 
$$
Appealing to these obtained bounds, as $n\to\infty$, 
\begin{align*}
E_{3,2} &\le C^*n^2\int_{Q_\ell}\int_{\R^d} a_n^{k-1}e^{-a_n} \cdot a_n^{k-1} e^{-\frac{a_n}{2}} \one \big\{  \|x-z\|\le 2r_n(w_n) \big\}\dif x \dif z \\
&=C^* n^2 a_n^{2(k-1)} e^{-\frac{3a_n}{2}} b_n^{-1} r_n(w_n)^d \le C^* a_n^k e^{-\frac{a_n}{2}} \to 0, 
\end{align*}
as desired. 
\end{proof}

Recall now that $(\zeta_k^{(\ell)})_{\ell\ge1}$ are i.i.d.~Poisson point processes on $\ois$ with intensity $\Leb \otimes \tau_k$. 
The next proposition claims that the process 
\begin{equation}  \label{e:def.zeta.kn}
	\zeta_{k,n} := \sum_{\ell=1}^{b_n} \zeta_k^{(\ell)} \circ \ktln^{-1}
\end{equation}
satisfies the desired  LDP in Theorem \ref{t:LDP.L.kn}. Let $(\Omega', \mathcal F', \P')$ be the probability space for which \eqref{e:def.zeta.kn} is defined. 

\begin{proposition}  \label{p:LDP.zeta.kn}
The sequence $(\zeta_{k,n}/b_n)_{n\ge1}$ satisfies an LDP in the weak topology, with rate $b_n$ and rate function $H_k(\cdot\, | \, \Leb \otimes \tau_k) = \Lambda_k^*$.
\end{proposition}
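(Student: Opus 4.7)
The plan is to recognize $\zeta_{k,n}$ as a \emph{single} Poisson point process on $E_0$ with intensity measure $b_n(\Leb\otimes\tau_k)$, thereby reducing the statement to the classical LDP for a scaled Poisson random measure. By the superposition property of independent Poisson processes together with the change of variables induced by $\ktln$, each summand $\zeta_k^{(\ell)}\circ \ktln^{-1}$ is Poisson on $Q_\ell\times[s_0,\infty)$ with intensity $b_n\cdot \Leb|_{Q_\ell}\otimes \tau_k$ (the factor $b_n$ arising from the Jacobian of $\kln^{-1}$), and independent superposition over $\ell$ assembles to a Poisson process on $E_0$ with intensity exactly $b_n(\Leb\otimes \tau_k)$.

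With this identification in hand, the Laplace functional is available in closed form and is, crucially, an \emph{exact} exponential rather than an asymptotic one: for every $f\in C_b(E_0)$,
$$\E'\bigl[\exp\langle f,\zeta_{k,n}\rangle\bigr]=\exp\Bigl(b_n\int_{E_0}\bigl(e^{f(x,u)}-1\bigr)\dif x\,\tau_k(\dif u)\Bigr),$$
so the normalized log-Laplace functional equals $\Lambda_k(f)$ for every $n$, without any error term. A Bryc/Gärtner-Ellis-type argument in the weak topology on $M_+(E_0)$ then delivers the LDP with rate $b_n$ and candidate rate function $\Lambda_k^*$. The equivalence $\Lambda_k^*=H_k(\cdot\,|\,\Leb\otimes\tau_k)$ is the Donsker-Varadhan variational representation of the relative entropy with respect to the finite measure $\Leb\otimes \tau_k$ (total mass $e^{-s_0}/(k-1)!$) and follows from standard convex-duality arguments, exploiting that any $\rho \in M_+(E_0)$ with $H_k(\rho\,|\,\Leb\otimes\tau_k)<\infty$ is absolutely continuous with respect to $\Leb\otimes\tau_k$ and the optimal $f$ in the Legendre transform is $f=\log(\dif\rho/\dif(\Leb\otimes\tau_k))$.

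The main technical hurdle I expect is exponential tightness, since $E_0=[0,1]^d\times[s_0,\infty)$ is not compact along the second coordinate. I plan to exploit the finiteness of the intensity measure by splitting at height $T$: on the compact slab $[0,1]^d\times[s_0,T]$, tightness in the weak topology reduces to a uniform bound on the total Poisson mass, which is immediate from the Chernoff bound applied to a $\pois{b_n\cdot (1-e^{-(T-s_0)})e^{-s_0}/(k-1)!}$ variable; on the tail, $\zeta_{k,n}([0,1]^d\times(T,\infty))$ is Poisson with mean $b_ne^{-T}/(k-1)!$, so the probability it exceeds any prescribed level $\alpha>0$ is at most $\exp(-\beta(\alpha,T)\,b_n)$ with $\beta(\alpha,T)\to\infty$ as $T\to\infty$. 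Combining these across a sequence $T_m\uparrow\infty$ produces weak-compact sets $K_\alpha\subset M_+(E_0)$ outside of which $\zeta_{k,n}/b_n$ lies with probability at most $e^{-\alpha b_n}$, supplying the exponential tightness needed to promote the Laplace-functional limit to the full LDP.
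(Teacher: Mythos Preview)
Your approach is correct and aligns with the paper's. Both proofs hinge on the same key observation: by the mapping theorem and superposition, $\zeta_{k,n}$ is a Poisson point process on $E_0$ with intensity $b_n(\Leb\otimes\tau_k)$, equivalently $\zeta_{k,n}\stackrel{d}{=}\sum_{\ell=1}^{b_n}\xi_k^{(\ell)}$ for i.i.d.\ Poisson processes $\xi_k^{(\ell)}$ with intensity $\Leb\otimes\tau_k$.

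The difference is in how the LDP is then extracted. The paper proceeds by citation: it invokes a Poisson variant of Sanov's theorem (Proposition~3.6 of \cite{wireless3} together with \cite[Theorem~6.2.10]{dembo:zeitouni:1998}) to obtain the LDP with rate function $H_k(\cdot\,|\,\Leb\otimes\tau_k)$, and separately applies Cram\'er's theorem in Polish spaces \cite[Theorem~6.1.3]{dembo:zeitouni:1998} to obtain the LDP with rate function $\Lambda_k^*$; the identity $H_k=\Lambda_k^*$ then follows from uniqueness of the rate function. You instead compute the Laplace functional and verify exponential tightness by hand via the tail split at height $T$, then identify $\Lambda_k^*=H_k$ by convex duality. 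Your route is more self-contained and makes explicit the exponential-tightness ingredient that the paper absorbs into the cited black boxes; the paper's route is shorter on the page but relies on those references doing the work. Either way the substance is the same.
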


\begin{proof}
By the transformation theorem, $\zeta_k^{(\ell)} \circ \ktln^{-1}$ becomes a Poisson point process on $\ois$ with intensity $(\Leb \otimes \tau_k) \circ \ktln^{-1}  =b_n (\Leb|_{Q_\ell}\otimes \tau_k)$. As $(\zeta_k^{(\ell)})_{\ell\ge1}$ are i.i.d., \eqref{e:def.zeta.kn} turns out to be a Poisson point process on $\ois$ with intensity $b_n (\Leb \otimes \tau_k)$; thus, there exists a sequence $(\xi_k^{(\ell)})_{\ell\ge1}$ of i.i.d.~Poisson point processes on $\ois$ with intensity $\Leb \otimes \tau_k$, so that 
\begin{equation}  \label{e:superposition.equality.dist}
\zeta_{k,n} \stackrel{d}{=}  \sum_{\ell=1}^{b_n} \xi_k^{(\ell)}. 
\end{equation}
For convenience, we assume that $(\xi_k^{(\ell)})_{\ell\ge1}$ are defined in the same probability space $(\Omega', \mathcal F', \P')$. 
By applying the Poisson variant of Sanov's theorem to \eqref{e:superposition.equality.dist}, we conclude that  $(\zeta_{k,n}/b_n)_{n\ge1}$ satisfies an LDP with rate $b_n$ and rate function $H_k(\cdot\, |\, \Leb \otimes \tau_k)$; see \cite[Proposition 3.6]{wireless3} and  \cite[Theorem 6.2.10]{dembo:zeitouni:1998} for details.
To deduce the LDP with rate function $\Lambda_k^*$, 
we compute the logarithmic Laplace functional of $\xi_k^{(1)}$: it follows from Theorem 5.1 in \cite{resnick:2007} that for every $f\in C_b(\ois)$, 
$$
\Lambda_k(f):= \log \E' \big[ e^{\xi_k^{(1)}}(f) \big] =\int_{\ois} \big( e^{f(x,u)}-1 \big)\,\d x \, \tau_k(\d u). 
$$
Now, Cram\'er's theorem in Polish spaces (see, e.g., Theorem 6.1.3 in \cite{dembo:zeitouni:1998}) can yield the required LDP, for which $\Lambda_k^*$ is obtained as the Legendre transform of $\Lambda_k$. 

\end{proof}

By the maximal coupling argument (see \cite[Lemma 4.32]{kallenberg}), for every $\ell\ge1$ there exists a coupling $(\hat \eta_{k,n}^{(\ell)}, \hat \zeta_k^{(\ell)})$ defined on a probability space $(\hat \Omega_\ell, \mathcal{\hat F}_\ell,  \hat \P_\ell)$, such that $\hat \eta_{k,n}^{(\ell)} \stackrel{d}{=} \eta_{k,n}^{(\ell)}$ and $\hat \zeta_k^{(\ell)} \stackrel{d}{=}  \zeta_k^{(\ell)}$, and 
\begin{equation}  \label{e:max.coupling.conv}
\hat \P_\ell \big(\hat \eta_{k,n}^{(\ell)} \neq \hat \zeta_k^{(\ell)} \big) = d_{\ms{TV}} \big( \mathcal L (\eta_{k,n}^{(\ell)}), \mathcal L (\zeta_k^{(\ell)}) \big) \le d_{\ms{KR}} \big( \mathcal L (\eta_{k,n}^{(\ell)}), \mathcal L (\zeta_k^{(\ell)}) \big) \to 0, \ \ \ n\to\infty. 
\end{equation}
In particular, $(\hat \eta_{k,n}^{(\ell)}, \hat \zeta_{k}^{(\ell)})_{\ell\ge1}$ constitutes a sequence of i.i.d.~random vectors on the probability space $(\hat \Omega, \hat{\mathcal F}, \hat \P)$, where $\hat \Omega = \prod_{\ell=1}^\infty \hat \Omega_\ell$, $\hat{\mathcal F} =  \bigotimes_{\ell=1}^\infty \hat{\mathcal F}_\ell$, and $\hat \P=  \bigotimes_{\ell=1}^\infty \hat \P_\ell$. 
Defining $\hat \eta_{k,n}$ and $\hat \zeta_{k,n}$ analogously to \eqref{e:def.eta.kn2} and \eqref{e:def.zeta.kn}, Proposition \ref{p:eta.zeta.negligible} below  demonstrates that $(\hat \eta_{k,n}/b_n)_{n\ge1}$ and $(\hat \zeta_{k,n}/b_n)_{n\ge1}$ are exponentially equivalent (in terms of the total variation distance) under the  coupled probability measure $\hat \P$. Since the LDP for $(\zeta_{k,n}/b_n)_{n\ge1}$ was already given by Proposition \ref{p:LDP.zeta.kn}, this   exponential equivalence allows us to conclude that $(\eta_{k,n}/b_n)_{n\ge1}$ fulfills an LDP in Theorem \ref{t:LDP.L.kn}. 

\begin{proposition}  \label{p:eta.zeta.negligible}
For every $\delta>0$, 
$$
\frac{1}{b_n}\log \hat \P \big( d_{\ms{TV}}( \hat\eta_{k,n}, \, \hat\zeta_{k,n}) \ge \delta b_n\big)\to -\infty, \ \ \ n\to\infty. 
$$
\end{proposition}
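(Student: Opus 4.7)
My plan is to reduce $d_{\ms{TV}}(\hat\eta_{k,n},\hat\zeta_{k,n})$ to a sum of i.i.d.\ contributions from the cells $Q_\ell$, then control the sum by truncation and Chernoff-type estimates. Because the measures $\hat\eta_{k,n}^{(\ell)}\circ\ktln^{-1}$ and $\hat\zeta_k^{(\ell)}\circ\ktln^{-1}$ are supported on the pairwise disjoint strips $Q_\ell\times[s_0,\infty)$, subadditivity of total variation, combined with the elementary bound $d_{\ms{TV}}(\mu,\nu)\le(\mu(E_0)+\nu(E_0))\one\{\mu\ne\nu\}$ valid for point measures, gives
$$d_{\ms{TV}}(\hat\eta_{k,n},\hat\zeta_{k,n})\le\sum_{\ell=1}^{b_n}(N_\ell+M_\ell)\,\one\{A_\ell\},$$
where $N_\ell:=\hat\eta_{k,n}^{(\ell)}(E_0)$, $M_\ell:=\hat\zeta_k^{(\ell)}(E_0)$ and $A_\ell:=\{\hat\eta_{k,n}^{(\ell)}\ne\hat\zeta_k^{(\ell)}\}$. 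By the independence of the cell-wise couplings, the summands are i.i.d.\ in $\ell$, and by \eqref{e:max.coupling.conv} the marginal probability $\hat\P(A_\ell)=:p_n$ equals $d_{\ms{TV}}(\mathcal L(\eta_{k,n}^{(1)}),\mathcal L(\zeta_k^{(1)}))$ and so tends to $0$.

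For a fixed truncation level $L$, I split $(N_\ell+M_\ell)\one\{A_\ell\}\le L\one\{A_\ell\}+(N_\ell+M_\ell)\one\{N_\ell+M_\ell>L\}$, whence $\hat\P(d_{\ms{TV}}(\hat\eta_{k,n},\hat\zeta_{k,n})\ge\delta b_n)$ is bounded by $\hat\P(P_1)+\hat\P(P_2)$ with $P_1=\{L\textstyle\sum_\ell\one\{A_\ell\}\ge\delta b_n/2\}$ and $P_2=\{\sum_\ell V_\ell\ge\delta b_n/2\}$ where $V_\ell:=(N_\ell+M_\ell)\one\{N_\ell+M_\ell>L\}$. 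For $P_1$ the sum $\sum_\ell\one\{A_\ell\}$ is $\mathrm{Bin}(b_n,p_n)$, and the Chernoff bound for the binomial gives, for any fixed $L$,
$$\frac{1}{b_n}\log\hat\P(P_1)\le-\frac{\delta}{2L}\log\frac{\delta}{2Lep_n}\longrightarrow-\infty,$$
since $p_n\to 0$.

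For $P_2$, a Chernoff bound on the i.i.d.\ sum yields
$$\frac{1}{b_n}\log\hat\P(P_2)\le-\frac{\lambda\delta}{2}+\log\E[e^{\lambda V_1}],\qquad\lambda>0.$$
By Cauchy-Schwarz, $\E[e^{\lambda V_1}]-1\le\sqrt{\E[e^{2\lambda(N_1+M_1)}]\,\hat\P(N_1+M_1>L)}$, and Markov's inequality bounds the last probability by $(\E[N_1]+\alpha_k)/L$. Hence, provided $\sup_n\E[e^{2\lambda(N_1+M_1)}]<\infty$ for every $\lambda>0$, one has $\log\E[e^{\lambda V_1}]=O_\lambda(L^{-1/2})\to 0$ as $L\to\infty$ uniformly in $n$. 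Combining with the binomial estimate, a diagonal argument (fix any $K>0$, then fix $\lambda=2K/\delta$, then fix $L$ large enough that $\log\E[e^{\lambda V_1}]\le 1$, and finally take $n\to\infty$) produces $\limsup_n\frac{1}{b_n}\log\hat\P(d_{\ms{TV}}\ge\delta b_n)\le-K$ for every $K$, i.e.\ the claimed $-\infty$ limit.

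The hard part, then, is the uniform exponential moment bound $\sup_n\E[e^{\lambda N_\ell}]<\infty$ for every $\lambda>0$. The naive estimate $N_\ell\le|\Pnr_{Q_\ell}|\sim\mathrm{Poisson}(n/b_n)$ is useless since $n/b_n=a_n^{-(k-1)}e^{a_n}\to\infty$. Instead, one must exploit that every point counted by $N_\ell$ has at most $k-1$ neighbours of $\Pnr_{Q_\ell}$ within distance $r_n(s_0)$, which forces $N_\ell$ to behave asymptotically like $\mathrm{Poisson}(\alpha_k)$ with the constant mean $\alpha_k=e^{-s_0}/(k-1)!$. Concretely, via the multivariate Mecke formula applied to $\Pnr_{Q_\ell}$, I would show $\sup_n\E[N_\ell^{(m)}]\le C^m$ and $\E[N_\ell^{(m)}]\to\alpha_k^m$ for every $m\ge 1$; the requisite $m$-fold off-diagonal integrals split into a contribution from $m$-tuples with pairwise distances exceeding $2r_n(s_0)$ (which factorises via spatial independence) and from those with at least one short pair (which are controlled by exactly the geometric mechanism used to bound $E_1,E_2,E_3$ in the proof of Proposition~\ref{p:KR.conv1}). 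Bell-polynomial conversion from factorial to ordinary moments then gives $\sup_n\E[e^{\lambda N_\ell}]\le e^{C(e^\lambda-1)}$ for all $\lambda$, and combined with the Poisson moment generating function of $M_\ell$ this supplies the missing input and completes the proof.
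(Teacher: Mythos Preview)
Your argument is correct and reaches the same destination as the paper, but it takes a longer road in two places.

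First, the truncation at level $L$ is not needed. The paper applies Markov/Chernoff directly to the i.i.d.\ sum $\sum_\ell d_{\ms{TV}}(\hat\eta_{k,n}^{(\ell)},\hat\zeta_k^{(\ell)})$, reducing the claim to $\hat\E\big[e^{a\,d_{\ms{TV}}(\hat\eta_{k,n}^{(1)},\hat\zeta_k^{(1)})}\big]\to 1$ for every $a>0$. Since $d_{\ms{TV}}(\hat\eta_{k,n}^{(1)},\hat\zeta_k^{(1)})\to 0$ in $\hat\P$-probability by the maximal coupling, this follows from uniform integrability, which in turn follows from Cauchy--Schwarz and exactly the exponential-moment bound $\sup_n\E[e^{2aN_1}]<\infty$ that you also need. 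Your $P_1$/$P_2$ split works, but it adds an extra layer.

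Second, and more interestingly, the paper obtains the crucial bound $\sup_n\E[e^{aN_1}]<\infty$ without any factorial-moment computation. It covers $K_1$ by finitely many translates of a diluted family of cubes $J$ of side $r_n(s_0)/\sqrt d$ spaced $4r_n(s_0)$ apart. Within one diluted family the contributions from different $J$'s are independent, and each contribution is \emph{deterministically} at most $k$: if $J$ contained $k+1$ points of $\Pn$ then, since $\mathrm{diam}(J)\le r_n(s_0)$, every one of them would have $\Pn(B_{r_n(s_0)}(X))\ge k+1$ and hence fail the constraint. This collapses the MGF to a product of factors $1+O(e^{ak}b_n r_n(s_0)^d)$, whose logarithm is uniformly bounded. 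Your factorial-moment route via Mecke should also succeed, but for general $m$ the ``at least one close pair'' term requires a cluster decomposition that you have only gestured at; the paper's deterministic cap avoids that combinatorics entirely and is considerably shorter.
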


\begin{proof}
The proof is highly related to \cite[Lemma 5.5]{hirsch:owada:2022}, but we still want to give a concise and self-contained proof. 
By Markov's inequality and the fact that $(\hat \eta_{k,n}^{(\ell)}, \hat \zeta_{k}^{(\ell)})_{\ell\ge1}$ are i.i.d.~processes, we have, for every $a>0$, 
\begin{align*}
\frac{1}{b_n} \log \hat \P \big( d_{\ms{TV}} ( \hat \eta_{k,n},  \hat \zeta_{k,n}) \ge \delta b_n\big) &\le  \frac{1}{b_n} \log \hat \P  \Big( \sum_{\ell=1}^{b_n} d_\ms{TV} (\hat \eta_{k,n}^{(\ell)}, \hat \zeta_k^{(\ell)}) \ge \delta b_n \Big)  
\le -a\delta +\log \hat \E \big[e^{a d_\ms{TV} (\hat \eta_{k,n}^{(1)}, \, \hat \zeta_{k}^{(1)}) }  \big], 
\end{align*}
where $\hat \E$ denotes an expectation with respect to $(\hat \Omega, \hat{\mathcal F}, \hat \P)$. Since $a>0$ is arbitrary, the desired result immediately follows if we can prove that for every $a>0$, 
\begin{equation}  \label{e:dTV.exp.conv}
\lim_{n\to\infty}\hat \E \Big[e^{a d_\ms{TV} (\hat \eta_{k,n}^{(1)}, \, \hat \zeta_{k}^{(1)}) }  \Big] =1. 
\end{equation}
By virtue of \eqref{e:max.coupling.conv}, $d_\ms{TV}(\hat \eta_{k,n}^{(1)}, \hat \zeta_k^{(1)})$ converges to $0$ in probability with respect to $\hat \P$. By the Cauchy-Schwarz inequality, \eqref{e:K.and.Pn1}, and the fact that $\zeta_k^{(1)}(E_0)$ is Poisson with mean $e^{-s_0}/(k-1)!$, 
\begin{align*}
\hat \E \Big[e^{a d_\ms{TV} (\hat \eta_{k,n}^{(1)}, \, \hat \zeta_{k}^{(1)}) }  \Big] &\le \bigg\{ \E \Big[ e^{2a \eta_{k,n}^{(1)}(E_0)} \Big]  \bigg\}^{1/2} \bigg\{ \E' \Big[ e^{2a \zeta_k^{(1)}(E_0)} \Big] \bigg\}^{1/2} \\
&= \bigg\{ \E \Big[ e^{2a \sum_{X\in \Pnr_{K_1}} \one \big\{ \Pn(B_{r_n(s_0)}(X))\le k \big\} }\Big]  \bigg\}^{1/2}  \exp \Big\{  \frac{e^{-s_0}}{2(k-1)!} (e^{2a}-1) \Big\}. 
\end{align*}
Now, the desired uniform integrability for   \eqref{e:dTV.exp.conv} follows, provided that for every $a>0$, 
\begin{equation}  \label{e:UI1}
\limsup_{n\to\infty} \E \Big[ e^{a \sum_{X\in \Pnr_{K_1}} \one \big\{ \Pn(B_{r_n(s_0)}(X))\le k \big\} }\Big]  <\infty. 
\end{equation}
As in the proof of \cite[Lemma 5.5]{hirsch:owada:2022}, by utilizing the diluted family of cubes
$$
G=\big\{  4r_n(s_0)z + \big[ 0,r_n(s_0)/\sqrt{d} \big]^d\subset Q_1: z\in \bbz^d \big\}, 
$$
it turns out that  \eqref{e:UI1} is  obtained as a consequence of 
{
$$
\limsup_{n\to\infty} \bigg\{ \E \Big[ e^{a\sum_{X\in\Pn} \one \big\{ \Pn (B_{r_n(s_0)}(X)) \le k, \, X\in K_1 \cap J  \big\} }\Big] \bigg\}^{1/(b_n(4r_n(s_0))^d)} < \infty, 
$$
}
where $J=\big[ 0, r_n(s_0)/\sqrt{d} \big]^d$ {and $1/(b_n(4r_n(s_0))^d)$ represents the number of cubes in $G$}. Notice that 
\begin{equation}  \label{e:up.to.k}
\sum_{X\in\Pn} \one \big\{ \Pn (B_{r_n(s_0)}(X)) \le k, \, X\in K_1\cap J \big\}\in \{ 0,1,\dots,k \}, 
\end{equation}
because if there exist more than $k$  points inside $J$,  these points never  contribute to \eqref{e:up.to.k}. Therefore, by Markov's inequality, 
{
\begin{align}
&\bigg\{ \E \Big[ e^{a\sum_{X\in\Pn} \one \big\{ \Pn (B_{r_n(s_0)}(X)) \le k, \, X\in K_1 \cap J  \big\} }\Big] \bigg\}^{1/(b_n(4r_n(s_0))^d)}  \label{e:suff.UI1}\\
&\le \bigg( 1 + \sum_{\ell=1}^k e^{a\ell} \E \Big[ \sum_{X\in\Pn} \one \big\{ \Pn (B_{r_n(s_0)}(X)) \le k, \, X\in K_1 \cap J  \big\} \Big] \bigg)^{1/(b_n(4r_n(s_0))^d)}\notag  \\
&= \bigg( 1 + \sum_{\ell=1}^k e^{a\ell}  n \sum_{i=0}^{k-1} e^{-(a_n+s_0)} \frac{(a_n+s_0)^i}{i!} \P (X_1 \in  K_1 \cap J   )\bigg)^{1/(b_n(4r_n(s_0))^d)} \notag \\
&\le \big( 1+C^* e^{ak} b_nr_n(s_0)^d  \big)^{1/(b_n(4r_n(s_0))^d)}  \to e^{C^*e^{ak}/4^d} < \infty, \ \ \text{as } n\to\infty. \notag
\end{align} 
}
\end{proof}

\medskip

As shown in the last two propositions,  $(\eta_{k,n}/b_n)_{n\ge1}$  satisfies the LDP in Theorem \ref{t:LDP.L.kn}. Thus, our  final task is to demonstrate  that $(L_{k,n}/b_n)_{n\ge1}$ exhibits the same LDP as $(\eta_{k,n}/b_n)_{n\ge1}$. Although this can be done by 
  establishing  exponential equivalence between $(L_{k,n}/b_n)_{n\ge1}$ and $(\eta_{k,n}/b_n)_{n\ge1}$ in terms of the total variation distance, proving directly this  exponential equivalence seems to be difficult. Alternatively,  we set up   an additional sequence 
\begin{equation}  \label{e:def.eta.kn.p}
\eta_{k,n}' := \sum_{\ell=1}^{b_n} \sum_{X\in\Pnr_{K_\ell}}g\big(X, \Pnr_{Q_\ell}\big) \, \one \big\{ R_k(X, \Pnr_{Q_\ell}) \le r_n(w_n) \big\}  \, \delta_{(X, \, f(X, \Pnr_{Q_\ell}))}, 
\end{equation}
and prove  that $(\eta_{k,n}'/b_n)_{n\ge1}$ shows the same LDP as $(\eta_{k,n}/b_n)_{n\ge1}$ (see Proposition \ref{p:exp.neg.eta.etap}). Subsequently, we  define 
$$
L_{k,n}' := \begin{cases}
\sum_{X\in\Pn} g(X, \Pn)\, \one \big\{ R_k(X,\Pn)\le \sqrt{d}b_n^{-1/d} \big\}\, \delta_{(X, f(X, \Pn))} & \text{ if } |\Pn|>k,\\
\emptyset & \text{ if } |\Pn| \le k. 
\end{cases}
$$
and prove that $(L_{k,n}'/b_n)_{n\ge1}$ satisfies the same LDP as $(L_{k,n}/b_n)_{n\ge1}$ (see Proposition \ref{p:exp.neg.L.Lp}). Finally, Proposition \ref{p:exp.neg.etap.Lp} gives   exponential equivalence between $(L_{k,n}'/b_n)_{n\ge1}$ and  $(\eta_{k,n}'/b_n)_{n\ge1}$. Combining  Propositions \ref{p:exp.neg.eta.etap} -- \ref{p:exp.neg.etap.Lp} concludes  the required exponential equivalence between $(L_{k,n}/b_n)_{n\ge1}$ and  $(\eta_{k,n}/b_n)_{n\ge1}$.

\begin{proposition} \label{p:exp.neg.eta.etap}
The sequence $(\eta_{k,n}'/b_n)_{n\ge1}$ satisfies the same LDP as $(\eta_{k,n}/b_n)_{n\ge1}$. 
\end{proposition}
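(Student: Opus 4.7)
The plan is to show that $(\eta_{k,n}/b_n)_{n\ge 1}$ and $(\eta_{k,n}'/b_n)_{n\ge 1}$ are exponentially equivalent on $M_+(E_0)$ at speed $b_n$, and then to transfer the LDP by Theorem~4.2.13 of \cite{dembo:zeitouni:1998}. As in Proposition~\ref{p:eta.zeta.negligible}, I will work in total variation and pass to any metric for the weak topology for free at the end. Since $\eta_{k,n}'$ is obtained from $\eta_{k,n}$ by imposing the extra indicator $\mathbf{1}\{R_k(X,\Pnr_{Q_\ell}) \le r_n(w_n)\}$, the difference is a non-negative point measure, and combining \eqref{e:K.and.Pn1} with $B_{r_n(w_n)}(X) \subset Q_\ell$ for $X \in K_\ell$ will give, for all $n$ large enough so that $w_n \ge s_0$,
\begin{equation*}
d_\ms{TV}(\eta_{k,n},\eta_{k,n}') \;\le\; D_n \;:=\; \sum_{\ell=1}^{b_n} D_n^{(\ell)}, \qquad D_n^{(\ell)} \;:=\; \sum_{X\in \Pnr_{K_\ell}} \mathbf{1}\bigl\{\Pn(B_{r_n(w_n)}(X)) \le k\bigr\}.
\end{equation*}
Thus the whole task reduces to showing that $b_n^{-1}\log \P(D_n \ge \delta b_n) \to -\infty$ for every $\delta > 0$.

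Next, I will exploit that, by spatial independence and homogeneity of $\Pn$, the $(D_n^{(\ell)})_{\ell=1}^{b_n}$ form an i.i.d.~family. Markov's inequality then gives, for every $\lambda > 0$,
\begin{equation*}
\frac{1}{b_n}\log \P(D_n \ge \delta b_n) \;\le\; -\lambda \delta + \log \E\!\bigl[e^{\lambda D_n^{(1)}}\bigr],
\end{equation*}
so the only remaining step is to prove that $\E[e^{\lambda D_n^{(1)}}] \to 1$ for every fixed $\lambda > 0$; sending $\lambda \to \infty$ afterwards will deliver the required super-exponential decay.

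The main obstacle will be the exponential-moment control of $D_n^{(1)}$, but this should be available essentially for free from the dilution argument already carried out in Proposition~\ref{p:eta.zeta.negligible}. Indeed, since $r_n(w_n) \ge r_n(s_0)$ eventually, \eqref{e:K.and.Pn1} yields $D_n^{(1)} \le \eta_{k,n}^{(1)}(E_0)$, so the bound \eqref{e:UI1} transfers at once to give $\sup_n \E[e^{\mu D_n^{(1)}}] < \infty$ for every $\mu > 0$; this furnishes uniform integrability of $\{e^{\lambda D_n^{(1)}}\}_n$. On the other hand, the Mecke formula combined with \eqref{e:K.and.Pn1} gives
\begin{equation*}
\E[D_n^{(1)}] \;=\; n\,\Leb(K_1)\sum_{i=0}^{k-1} e^{-(a_n+w_n)}\frac{(a_n+w_n)^i}{i!} \;\le\; C\,(1+w_n/a_n)^{k-1}\,e^{-w_n} \;\longrightarrow\; 0,
\end{equation*}
because $w_n \to \infty$ with $w_n = o(a_n)$. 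Since $D_n^{(1)}$ takes values in $\bbz_{\ge 0}$, this forces $D_n^{(1)} \to 0$ in probability, and together with the uniform integrability above yields $\E[e^{\lambda D_n^{(1)}}] \to 1$. Plugging back into the Chernoff bound, letting $\lambda \to \infty$, and applying Theorem~4.2.13 of \cite{dembo:zeitouni:1998} will then complete the proof.
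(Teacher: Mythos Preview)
Your proof is correct and follows essentially the same route as the paper: bound $d_{\ms{TV}}(\eta_{k,n},\eta_{k,n}')$ by the i.i.d.\ sum $D_n=\sum_\ell D_n^{(\ell)}$, then apply the Chernoff bound and show $\E[e^{\lambda D_n^{(1)}}]\to 1$. The only difference is in this last step. The paper reruns the diluted-cube argument of Proposition~\ref{p:eta.zeta.negligible} at the larger scale $r_n(w_n)$, obtaining directly
\[
\bigl(1+C^* e^{ak}\, b_n r_n(w_n)^d e^{-w_n}\bigr)^{1/(b_n r_n(w_n)^d)}\to 1.
\]
You instead observe the pointwise domination $D_n^{(1)}\le \eta_{k,n}^{(1)}(E_0)$ (valid because $r_n(w_n)\ge r_n(s_0)$ forces $\{\Pn(B_{r_n(w_n)}(X))\le k\}\subset\{\Pn(B_{r_n(s_0)}(X))\le k\}$), so that \eqref{e:UI1} already supplies uniform integrability of $e^{\lambda D_n^{(1)}}$; combined with $\E[D_n^{(1)}]\to 0$ and Vitali's theorem this gives the desired limit. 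Your variant is arguably cleaner in that it recycles \eqref{e:UI1} rather than repeating the dilution computation, while the paper's version is more self-contained and makes the role of $e^{-w_n}$ explicit.
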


\begin{proof}
For our purpose, we demonstrate that for every $\delta>0$, 
\begin{equation}  \label{e:diff.eta.kn.and.eta,kn.prime}
\frac{1}{b_n} \log \P \big( d_{\ms{TV}}(\eta_{k,n}, \, \eta_{k,n}') \ge  \delta b_n\big) \to -\infty,  \ \ \ n\to\infty. 
\end{equation}
First, we see from \eqref{e:K.and.Pn1} that 
\begin{align*}
d_{\ms{TV}}(\eta_{k,n}, \, \eta_{k,n}') &\le  \sum_{\ell=1}^{b_n} \sum_{X\in \Pnr_{\QmM}} \one \big\{ R_k(X, \Pnr_{Q_\ell}) >r_n(w_n) \big\} \\
&=  \sum_{\ell=1}^{b_n} \sum_{X\in \Pnr_{\QmM}} \one \Big\{  \Pn \big( B_{r_n(w_n)}(X) \big) \le k  \Big\}, 
\end{align*}
from which one can bound \eqref{e:diff.eta.kn.and.eta,kn.prime} by 
$$
-a\delta + \log \E\Big[ e^{a\sum_{X\in \Pnr_{K_1}} \one \{ \Pn( B_{r_n(w_n)}(X) ) \le k \} } \Big]
$$
for every $a>0$. 
Now, one has to show that for every $a>0$, 
\begin{equation}  \label{e:UI2}
\limsup_{n\to\infty} \E \Big[ e^{a\sum_{X\in\Pnr_{K_1}} \one \{ \Pn( B_{r_n(w_n)}(X) ) \le k \} } \Big] \le 1. 
\end{equation}
As in the proof of Proposition \ref{p:eta.zeta.negligible}, \eqref{e:UI2} can be implied by 
$$
\limsup_{n\to\infty} \bigg\{ \E \Big[ e^{a\sum_{X\in\Pn} \one \big\{ \Pn (B_{r_n(w_n)}(X)) \le k, \, X\in K_1 \cap J  \big\} }\Big] \bigg\}^{1/(b_nr_n(w_n)^d)} \le 1, 
$$
for all $a>0$, where $J=\big[ 0, r_n(w_n)/\sqrt{d} \big]^d$. Now, instead of \eqref{e:suff.UI1}, we have that 
\begin{align*}
&\limsup_{n\to\infty} \bigg\{ \E \Big[ e^{a\sum_{X\in\Pn} \one \big\{ \Pn (B_{r_n(w_n)}(X)) \le k, \, X\in K_1 \cap J  \big\} }\Big] \bigg\}^{1/(b_nr_n(w_n)^d)} \\
&\le \limsup_{n\to\infty} \big( 1+C^* e^{ak} b_n r_n(w_n)^d e^{-w_n} \big)^{1/(b_nr_n(w_n)^d)} \\
&=\limsup_{n\to\infty} e^{C^*e^{ak} e^{-w_n}} = 1, 
\end{align*}
as required. 
\end{proof}

\begin{proposition}  \label{p:exp.neg.L.Lp}
The sequence $(L_{k,n}'/b_n)_{n\ge1}$ satisfies the same LDP as $(L_{k,n}/b_n)_{n\ge1}$. 
\end{proposition}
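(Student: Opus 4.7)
The plan is to establish exponential equivalence between $(L_{k,n}/b_n)_{n\ge 1}$ and $(L_{k,n}'/b_n)_{n\ge 1}$ in total variation at rate $b_n$; then the same LDP for both sequences follows automatically. Set $R_n := \sqrt{d}\, b_n^{-1/d}$. On $\{|\Pn|>k\}$, the two processes differ only via points $X$ with $R_k(X,\Pn)>R_n$, which by the natural analog of \eqref{e:K.and.Pn1} means $\Pn(B_{R_n}(X))\le k$; on $\{|\Pn|\le k\}$ both processes are empty. Hence
$$
d_\ms{TV}(L_{k,n},L_{k,n}')\ \le\ N_n\ :=\ \sum_{X\in\Pn}\one\big\{\Pn(B_{R_n}(X))\le k\big\},
$$
and it suffices to show $b_n^{-1}\log\P(N_n\ge \delta b_n)\to-\infty$ for every $\delta>0$.

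Next, I would partition $[0,1]^d$ into disjoint sub-cubes $J_1,\dots,J_{|G|}$ of side length $R_n/\sqrt{d}=b_n^{-1/d}$, so each has volume $1/b_n$ and diameter exactly $R_n$; in particular $|G|\asymp b_n$, with boundary discrepancies contributing a $1+o(1)$ factor that I suppress. The key geometric observation is that for $X\in J_i$ we have $J_i\subset B_{R_n}(X)$, so $\Pn(B_{R_n}(X))\ge \Pn(J_i)$. Hence, whenever $\Pn(J_i)\ge k+1$ no point of $\Pn\cap J_i$ contributes to $N_n$, while otherwise at most $k$ points contribute, giving
$$
N_n\ \le\ k\,M_n,\qquad M_n\ :=\ \sum_{i=1}^{|G|}\one\{\Pn(J_i)\le k\}.
$$
Because the $J_i$ are disjoint, $M_n$ is a sum of $|G|$ i.i.d.~Bernoullis with common parameter $p_n:=\P(Y\le k)$ where $Y\sim\mathrm{Poisson}(n/b_n)$.

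A straightforward Chernoff bound then gives, for every $a>0$,
$$
\P(N_n\ge \delta b_n)\ \le\ \P(M_n\ge \delta b_n/k)\ \le\ e^{-a\delta b_n/k}\bigl(1+(e^a-1)p_n\bigr)^{|G|},
$$
so $b_n^{-1}\log\P(N_n\ge \delta b_n)\le -a\delta/k+(|G|/b_n)(e^a-1)p_n$. Since $n/b_n=e^{a_n}/a_n^{k-1}\to\infty$, one has $p_n\sim e^{-n/b_n}(n/b_n)^k/k!$, which decays double-exponentially in $a_n$; in particular $b_n p_n\to 0$. The right-hand side therefore tends to $-a\delta/k$, and sending $a\to\infty$ yields the claim.

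Essentially no step is genuinely hard, in marked contrast with Propositions \ref{p:eta.zeta.negligible} and \ref{p:exp.neg.eta.etap}: the scale $R_n=\sqrt{d}\,b_n^{-1/d}$ is engineered so that the cubes are large enough to force $J_i\subset B_{R_n}(X)$ for all $X\in J_i$, while small enough that $|G|\asymp b_n$. The only point warranting care is the decay $b_n p_n\to 0$, which ultimately derives from the divergence of the Poisson mean $n/b_n$ --- a feature guaranteed by the defining assumption $a_n-\log n-(k-1)\log\log n\to-\infty$.
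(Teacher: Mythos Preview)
Your argument is correct and follows essentially the same approach as the paper's proof. The paper also bounds the total-variation distance by the count of points with large $k$-nearest neighbor radius, partitions $[0,1]^d$ into the cubes $Q_\ell$ of side length $b_n^{-1/d}$ (your cubes $J_i$ are exactly these $Q_\ell$), and uses the inclusion $Q_\ell\subset B_{R_n}(X)$ for $X\in Q_\ell$ to bound the count by a constant times $\sum_\ell\one\{\Pn(Q_\ell)\le k+1\}$ (you use $\le k$, which is the sharper and equally valid version); the conclusion then follows from the same Chernoff computation via $n/b_n\to\infty$.
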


\begin{proof}
Throughout the proof, we assume $|\Pn|>k$. Using the bound 
$$
d_{\ms{TV}} (L_{k,n},\, L_{k,n}') \le \sum_{X\in\Pn} \one \big\{ R_k(X,\Pn) > \sqrt{d}b_n^{-1/d} \big\}, 
$$
it is sufficient to show that,  for every $\delta>0$, 
$$
\frac{1}{b_n} \log \P \Big( \sum_{X\in\Pn} \one \big\{ R_k(X,\Pn) >\sqrt{d} b_n^{-1/d} \big\} \ge \delta b_n \Big) \to -\infty,  \ \ \ n\to\infty. 
$$
Suppose there exists a point $X \in \Pn \cap Q_\ell$ for some $\ell\in \{ 1,\dots,b_n\}$ so that $R_k(X, \Pn) > \sqrt{d}b_n^{-1/d}$. Then, $B_{R_k(X,\Pn)}(X)\cap Q_\ell$ contains at most $k+1$ points of $\Pn$ (including $X$ itself). Thus,
	
\begin{align*}
\sum_{X\in\Pn} \one \big\{ R_k(X,\Pn) >\sqrt db_n^{-1/d} \big\} &= \sum_{\ell=1}^{b_n} \sum_{X \in \Pnr_{ Q_\ell}} \one \big\{ R_k(X,\Pn) >\sqrt{d}b_n^{-1/d} \big\} \\
&\le (k+1) \sum_{\ell=1}^{b_n} \one\big\{ \Pn (Q_\ell) \le k+1\big\}. 
\end{align*}
Now, we only have to demonstrate that 
\begin{equation}  \label{e:at.most.k+1.smaller.cube}
\frac{1}{b_n} \log \P \Big( \sum_{\ell=1}^{b_n} \one\big\{ \Pn (Q_\ell) \le k+1\big\} \ge \delta b_n\Big) \to -\infty, \ \ \ n\to\infty. 
\end{equation}
By Markov's inequality, we have, for every $a>0$, 
\begin{align*}
\frac{1}{b_n} \log \P \Big( \sum_{\ell=1}^{b_n} \one\big\{ \Pn (Q_\ell) \le k+1\big\} \ge \delta b_n\Big) &\le -a\delta + \log \E \Big[ e^{a\one \{ \Pn (Q_1) \le k+1 \}} \Big] \\
&\le -a\delta + \log \Big( 1 + e^a \sum_{i=0}^{k+1} e^{-n\Leb(Q_1)}\, \frac{(n\Leb(Q_1))^i}{i!} \Big) \\
	&\le -a\delta + \log \Big( 1 +  (k+2)e^ae^{-nb_n^{-1}} (nb_n^{-1})^{k+1}  \Big) \\
&\to -a\delta,  \ \ \text{as } n\to\infty. 
\end{align*}
As $a>0$ is arbitrary, we have obtained \eqref{e:at.most.k+1.smaller.cube}. 
\end{proof}

\begin{proposition}  \label{p:exp.neg.etap.Lp}
The sequence $(L_{k,n}'/b_n)_{n\ge1}$ exhibits the same LDP as $(\eta_{k,n}'/b_n)_{n\ge1}$. 
\end{proposition}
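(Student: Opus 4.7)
The plan is to establish exponential equivalence of $(L_{k,n}'/b_n)_{n\ge1}$ and $(\eta_{k,n}'/b_n)_{n\ge1}$ in the total variation distance, i.e., for every $\delta>0$,
\[
\frac{1}{b_n}\log\P\big(d_{\ms{TV}}(L_{k,n}',\eta_{k,n}')\ge\delta b_n\big)\to-\infty,
\]
which implies the two sequences satisfy the same LDP.

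The first step is to compare the two processes point by point. For any $X\in\Pn$ with $X\in K_\ell$ and $R_k(X,\Pnr_{Q_\ell})\le r_n(w_n)$, the ball $B_{R_k(X,\Pnr_{Q_\ell})}(X)$ is contained in $Q_\ell$, so $\Pn$ and $\Pnr_{Q_\ell}$ coincide on it, whence $R_k(X,\Pn)=R_k(X,\Pnr_{Q_\ell})$ and $g(X,\Pn)=g(X,\Pnr_{Q_\ell})$. Since $r_n(w_n)\le\sqrt{d}\,b_n^{-1/d}$ for $n$ large (as $b_n^{1/d}r_n(w_n)\sim(a_n^k e^{-a_n})^{1/d}\to0$), such a point is counted identically by $L_{k,n}'$ and $\eta_{k,n}'$. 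Hence the symmetric difference is supported only on points lying either in the boundary region $M:=\bigcup_\ell M_\ell$ or in some $K_\ell$ with $R_k(X,\Pnr_{Q_\ell})>r_n(w_n)$. Using the identity $\one\{R_k(X,\Pnr_{Q_\ell})>r_n(w_n)\}=\one\{\Pn(B_{r_n(w_n)}(X))\le k\}$ valid on $K_\ell$, together with the equivalence \eqref{e:K.and.Pn1} applied to $g$, this yields
\[
d_{\ms{TV}}(L_{k,n}',\eta_{k,n}')\le V_1+V_2,
\]
where
\[
V_1:=\sum_{X\in\Pn\cap M}\one\{\Pn(B_{r_n(s_0)}(X))\le k\},\quad V_2:=\sum_{\ell=1}^{b_n}\sum_{X\in\Pnr_{K_\ell}}\one\{\Pn(B_{r_n(w_n)}(X))\le k\}.
\]

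The bound for $V_2$ coincides with the upper bound used in the proof of Proposition~\ref{p:exp.neg.eta.etap}, so the Markov plus diluted-cube argument there yields $\frac{1}{b_n}\log\P(V_2\ge\delta b_n)\to-\infty$. For $V_1$, the plan is to reuse the Chernoff scheme of Proposition~\ref{p:eta.zeta.negligible}: fix $a>0$, introduce a diluted family $G$ of sub-cubes $J$ of side $r_n(s_0)/\sqrt{d}$ placed on a grid of spacing $4r_n(s_0)$, and note that the local counts $T_J:=\sum_{X\in\Pn\cap J\cap M}\one\{\Pn(B_{r_n(s_0)}(X))\le k\}$ are independent across $J\in G$ (their $r_n(s_0)$-thickenings being pairwise disjoint) and bounded above by $k$ via pigeonhole. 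A Mecke-formula computation as in the preceding propositions gives $\log\E[e^{aT_J}]\le C^* e^{ak}b_n\Leb(J\cap M)$, and taking the product over $J\in G$ and a union bound over the $(4\sqrt{d})^d$ grid translates needed to cover $[0,1]^d$ yields $\frac{1}{b_n}\log\E[e^{aV_1}]\le C^* e^{ak}\Leb(M)$. Since $\Leb(M)=1-(1-2b_n^{1/d}r_n(w_n))^d\to0$, Markov's inequality followed by letting $a\to\infty$ gives $\frac{1}{b_n}\log\P(V_1\ge\delta b_n)\to-\infty$.

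The main obstacle is the boundary term $V_1$: while $M$ typically contains many Poisson points, only imposing the rare-event indicator $\{\Pn(B_{r_n(s_0)}(X))\le k\}$ brings the expected count down to $o(b_n)$. Converting this $L^1$-smallness into super-exponential decay at rate $b_n$ requires the diluted-cube independence structure together with the a priori bound $T_J\le k$; the crucial quantitative estimate driving the argument is $b_n^{1/d}r_n(w_n)\to0$, which amounts to $a_n^{k}e^{-a_n}\to0$ and follows from the standing assumption~\eqref{e:speed.an}.
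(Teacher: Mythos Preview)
Your argument is correct and proceeds along the same overall line as the paper's---exponential equivalence in total variation via a boundary/interior decomposition and a Chernoff bound built on a diluted grid of small cubes---but your treatment of the boundary term is genuinely simpler than the paper's.

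The paper decomposes the discrepancy by \emph{radius range}: Type~1 points (anywhere in $Q_\ell$) with $R_k(X,\Pn)\in\bigl(r_n(w_n),\sqrt d\,b_n^{-1/d}\bigr]$ and Type~2 points in $Q_\ell^\partial(r_n(w_n))$ with $R_k(X,\Pn)\in\bigl(r_n(s_0),r_n(w_n)\bigr]$. The Type~2 bound keeps the upper radius cut-off $R_k\le r_n(w_n)$, so the dependence range is $r_n(w_n)$, and independence is recovered by dissecting the boundary $\bigcup_\ell M_\ell$ into hyper-rectangles $I_{p,n}^{\bell}(r)$ indexed over $j$-tuples $\bell\in\I_j$, $1\le j\le d$; the vanishing mean then comes from $\ms{Leb}\bigl(I_{1,n}^{\bell}(r_n(w_n))\bigr)\sim r_n(w_n)^j b_n^{-(d-j)/d}$. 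You instead split by \emph{location}: your $V_2$ coincides with the paper's Type~1 bound restricted to the interior, and for $V_1$ you drop the upper radius cut-off entirely, keeping only $\{\Pn(B_{r_n(s_0)}(X))\le k\}$. This shrinks the dependence range to $r_n(s_0)$, so the single diluted grid at spacing $4r_n(s_0)$ already gives independence, and the smallness comes in one stroke from $\ms{Leb}(M)=1-(1-2b_n^{1/d}r_n(w_n))^d\to 0$. The paper's route is more granular (it tracks the face-dimension $j$ of the boundary), while yours is shorter and uses only tools already present in the proofs of Propositions~\ref{p:eta.zeta.negligible} and~\ref{p:exp.neg.eta.etap}.

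One minor wording issue: the phrase ``a union bound over the $(4\sqrt d)^d$ grid translates \ldots\ yields $\tfrac1{b_n}\log\E[e^{aV_1}]\le C^* e^{ak}\ms{Leb}(M)$'' is not literally a union bound on a moment generating function. What you mean is either (i) pigeonhole $\{V_1\ge\delta b_n\}\subset\bigcup_i\{V_1^{(i)}\ge\delta b_n/N\}$ and apply Markov to each $V_1^{(i)}$, or (ii) H\"older across the $N$ translates, giving $\tfrac1{b_n}\log\E[e^{aV_1}]\le\max_i\tfrac1{b_n}\log\E[e^{aNV_1^{(i)}}]$; either way the conclusion is the same after letting $n\to\infty$ and then $a\to\infty$.
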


\begin{proof}
We  prove exponential equivalence between the two sequences: for every $\delta>0$, 

$$
\frac{1}{b_n} \log \P \big( d_{\ms{TV}}(L_{k,n}', \, \eta_{k,n}') \ge \delta  b_n\big) \to -\infty,  \ \ \ n\to\infty. 
$$
For convenience, let us slightly change the formulation of $\eta_{k,n}'$ given at \eqref{e:def.eta.kn.p}. To begin, observe that    if $X\in \Pnr_{K_\ell}$ with $R_k(X, \Pnr_{Q_\ell})\le r_n(w_n)$, then  $R_k(X, \Pnr_{Q_\ell}) = R_k(X,\Pn)$. Hence, one can replace the restricted process $\Pnr_{Q_\ell}$ in \eqref{e:def.eta.kn.p} with $\Pn$; that is, 
$$
\eta_{k,n}'= \sum_{\ell=1}^{b_n} \sum_{X\in\Pnr_{K_\ell} }g (X, \Pn ) \, \one \big\{ R_k(X, \Pn) \le r_n(w_n) \big\}  \, \delta_{(X, \, f(X, \Pn))}. 
$$
Using this representation and assuming  $|\Pn|>k$ allows us to express the total variation distance  in such a way that 
\begin{align}
	d_{\ms{TV}}(L_{k,n}', \,\eta_{k,n}') &= \frac{1}{b_n} \sup_{A\subset \ois} \Big\{  \sum_{X\in\Pn} \one \big\{  (X, f(X,\Pn))\in A, \, R_k(X,\Pn) \le \sqrt{d}b_n^{-1/d} \big\} \label{e:dTV.boundary.bound}\\
	&\qquad  - \sum_{\ell=1}^{b_n} \sum_{X\in\Pnr_{\QmM}} \one \big\{  (X, f(X,\Pn))\in A, \, R_k(X,\Pn) \le r_n(w_n) \big\}  \, \Big\} \notag \\
&=: \frac{1}{b_n} \sup_{A\subset \ois} \big( T_n^{(1)} -  T_n^{(2)}\big). \notag 
\end{align}
We now derive an upper bound of \eqref{e:dTV.boundary.bound}. We  consider a $k$-nearest neighbor ball  centered at $X\in \Pn$  with $(X,f(X,\Pn))\in A$.  Suppose  this ball is counted  by $T_n^{(1)}$, but not counted by $T_n^{(2)}$. Then, this ball must be of either Type 1 or Type 2 as defined below. 
\vspace{7pt}

\noindent \textbf{Type 1}: The center $X$ is located in $Q_\ell$ for some $\ell \in \{ 1,\dots,b_n \}$ such that 
$$
r_n(w_n) <R_k(X,\Pn)\le  \sqrt{d} b_n^{-1/d}. 
$$
\noindent \textbf{Type 2}: The center $X$ is in $Q_\ell^\partial (r_n(w_n))$ for some $\ell\in \{ 1,\dots,b_n \}$, where  
$$
Q_\ell^\partial(r) := \big\{ x\in Q_\ell: \inf_{y\in \partial Q_\ell} \| x-y\| \le r \big\},  \ \ r>0, 
$$
so that $r_n(s_0) <R_k(X,\Pn) \le r_n(w_n)$. 
\vspace{7pt}

\noindent Then, the number of $k$-nearest neighbor balls of Type 1 can be bounded by 
$$
U_n^{(1)} :=  \sum_{\ell=1}^{b_n} \sum_{X\in \Pnr_{Q_\ell}} \one \big\{ R_k(X,\Pn)\in \big( r_n(w_n), \sqrt{d}b_n^{-1/d} \big] \big\}, 
$$
while the number of those of Type 2 is bounded by 
$$
U_n^{(2)} :=  \sum_{\ell=1}^{b_n}  \sum_{X\in \Pnr_{Q_\ell^\partial (r_n(w_n))}} \one \big\{ R_k(X,\Pn)\in \big( r_n(s_0), r_n(w_n) \big] \big\}. 
$$
Hence,  the desired result will follow if one can show that 
for every $\delta>0$ and $j=1,2$,
\begin{equation}  \label{e:Unj.negligibility}
\frac{1}{b_n} \log \P (U_n^{(j)} \ge \delta b_n) \to -\infty,  \ \ \ n\to\infty.
\end{equation}
We first deal  with the case $j=1$. Let $S_\ell$ be the collection of cubes $(Q_i)_{i=1}^{b_n}$ that intersect with 
$$
\text{Tube} (Q_\ell, \sqrt{d}b_n^{-1/d}) := \big\{ x\in [0,1]^d: \inf_{y\in Q_\ell} \ms{dist} (x,y) \le \sqrt{d}b_n^{-1/d}\big\}. 
$$
Then, the number $D_d$ of such cubes in $S_\ell$ is finite, depending only on  $d$. 
Now, we can offer the following bound: 
$$
U_n^{(1)} \le  \sum_{m=1}^{D_d} \sum_{\ell=1}^{\lfloor D_d^{-1}(b_n-m) \rfloor +1} \hspace{-20pt}\sum_{X \in \Pn} \one \{ X\in Q_{(\ell-1)D_d+m} \}\times \one \big\{  R_k(X, \Pn)   \in \big(  r_n(w_n), \sqrt{d}b_n^{-1/d} \big] \big\}. 
$$
By the homogeneity of $\Pn$ on the torus, we consider only the case $m=D_d$ and obtain from Markov's inequality that,  for every $a>0$, 
\begin{align}
&\frac{1}{b_n} \log \P \bigg(  \sum_{\ell=1}^{\lfloor D_d^{-1}b_n \rfloor } \hspace{-5pt}\sum_{X\in\Pnr_{ Q_{\ell D_d}}} \hspace{-10pt}\one \big\{   R_k(X, \Pn)   \in \big(  r_n(w_n), \sqrt{d}b_n^{-1/d} \big] \big\} \ge \delta b_n\bigg) \label{e:Un1.log}\\
&\le -a\delta + \frac{1}{b_n} \log \E \Big[ e^{a \sum_{\ell=1}^{\lfloor D_d^{-1}b_n \rfloor } \sum_{X\in\Pnr_{Q_{\ell D_d}}}\one \big\{ R_k(X, \Pn)) \in (r_n(w_n), \sqrt{d}b_n^{-1/d}] \big\} } \Big]. \notag 
\end{align}
Here, a key observation is that if $X\in \Pnr_{Q_{\ell D_d}}$ with $R_k(X,\Pn)\le \sqrt{d}b_n^{-1/d}$, then $R_k(X,\Pn) = R_k (X, \Pnr_{S_{\ell D_d}})$, such that   $\big(  \Pnr_{S_{\ell D_d}}, \, \ell=1,\dots,\lfloor D_d^{-1}b_n \rfloor \big)$ are i.i.d.~Poisson point processes. Hence, \eqref{e:Un1.log} can be further bounded by 
\begin{align*}
&-a\delta + D_d^{-1} \log \E\Big[ e^{a\sum_{X\in \Pnr_{Q_1}}\one \big\{ R_k(X,\Pn) \in (r_n(w_n), \sqrt{d}b_n^{-1/d}] \big\} } \Big] \\
&\le -a\delta + D_d^{-1} \log \E\Big[ e^{a\sum_{X\in \Pnr_{Q_1}}\one \big\{ \Pn (B_{r_n(w_n)}(X))\le k  \big\} } \Big]. 
\end{align*}
For the inequality above, we have  applied \eqref{e:K.and.Pn1} and   dropped the condition $R_k(X, \Pn)\le \sqrt{d}b_n^{-1/d}$. Now, it remains to show that for every $a>0$, 
$$
\limsup_{n\to\infty} \E \Big[ e^{a\sum_{X\in\Pnr_{Q_1}} \one \big\{ \Pn(B_{r_n(w_n)}(X)) \le k \big\} } \Big] \le 1. 
$$
The proof is however a simple repetition of the argument for \eqref{e:UI2}, so we  skip it here. 

Returning to \eqref{e:Unj.negligibility}, we next work with  the case $j=2$. To this aim, we exploit an argument similar to that in \cite[Proposition 5.6]{hirsch:owada:2022}.  For $1\le j \le d$, define the  collection of ordered $j$-tuples
$$
\I_j = \big\{ \bell=(\ell_1,\dots,\ell_j): 1\le \ell_1 < \dots < \ell_j \le d \big\}. 
$$
Given $\bell=(\ell_1,\dots,\ell_j)\in \I_j$, define also the collection of hyper-rectangles by 
\begin{align*}
J_n(r) &:= \bigg\{  \Big(  b_n^{-1/d}z  +\big[0,b_n^{-1/d}\big]^{\ell_1-1}\times [-r,r]\times \big[0,b_n^{-1/d}\big]^{\ell_2-\ell_1-1} \\
&\qquad  \times [-r,r] \times \big[0,b_n^{-1/d}\big]^{\ell_3-\ell_2-1} \times \dots \times [-r,r]   \\
&\qquad \times \big[0,b_n^{-1/d}\big]^{\ell_j-\ell_{j-1}-1} \times [-r,r] \times \big[0,b_n^{-1/d}\big]^{d-\ell_j} \Big) \cap [0,1]^d: z\in \bbz_+^d \bigg\}, \ \ \ r>0. 
\end{align*}
By construction, all the rectangles in $J_n(r)$ are contained in $\bigcup_{\ell=1}^{b_n} Q_\ell^\partial (r)$, and the number of rectangles in $J_n(r)$ is  $b_n$; hence, we can enumerate these rectangles  as 
$$
J_n(r)= \big( I_{p,n}^\bell(r), \, p=1,\dots,b_n \big). 
$$
In this setting,  one can bound $U_n^{(2)}$ by 
\begin{align*}
&\sum_{j=1}^d \sum_{\bell \in \I_j} \sum_{p=1}^{b_n} \sum_{X\in\Pn} \one \Big\{ X \in I_{p,n}^\bell \big( r_n(w_n) \big) \Big\}\, \\
&\qquad \qquad \qquad\qquad  \times \one \Big\{ R_k(X,\Pn) > r_n(s_0), \, B_{R_k(X,\Pn)}(X)\subset I_{p,n}^\bell \big( 2r_n(w_n) \big) \Big\}. 
\end{align*}
Owing to this bound, we need to prove that for every $j\in \{ 1,\dots,d \}$, $\bell\in \I_j$, and $\delta>0$,
\begin{align}
&\frac{1}{b_n}\log \P \bigg( \sum_{p=1}^{b_n} \sum_{X\in\Pn}  \one \Big\{ X \in I_{p,n}^\bell \big( r_n(w_n) \big) \Big\} \label{e:Un2.bound}\\
&\qquad  \qquad \times \one \Big\{ R_k(X,\Pn) > r_n(s_0), \, B_{R_k(X,\Pn)}(X)\subset I_{p,n}^\bell \big( 2r_n(w_n) \big) \Big\} \ge \delta b_n\bigg) \to -\infty, \ \ \ n\to\infty. \notag  
\end{align}
In the above, $B_{R_k(X, \Pn)}(X)\subset I_{p,n}^\bell \big( 2r_n(w_n) \big)$ with $X\in I_{p,n}^\bell \big( r_n(w_n) \big)$, implies that $R_k(X,\Pn) = R_k\big( X,\Pnr_{I_{p,n}^\bell (2r_n(w_n))} \big)$. 
Additionally,  $\big( I_{p,n}^\bell (2r_n(w_n))\big)_{p=1}^{b_n}$ are disjoint sets, so $\big( \Pnr_{I_{p,n}^\bell (2r_n(w_n))} \big)_{p=1}^{b_n}$ becomes a sequence of  i.i.d.~Poisson point processes. Hence, by appealing to Markov's inequality as well as \eqref{e:K.and.Pn1}, one can bound  \eqref{e:Un2.bound} by 
\begin{align*}
&-a\delta +\log \E\Big[ e^{a  \sum_{X\in\Pn} \one \big\{ X \in I_{1,n}^\bell( r_n(w_n))\big\} \times \one \big\{ R_k(X,\Pn)>r_n(s_0), \, B_{R_k(X,\Pn)}(X)\subset I_{1,n}^\bell (2r_n(w_n)) \big\}   } \Big] \\
&\le -a\delta +\log \E\Big[ e^{a  \sum_{X\in\Pn} \one \big\{ X \in I_{1,n}^\bell( r_n(w_n)) \big\}\times \one \big\{ \Pn(B_{r_n(s_0)}(X)) \le k\big\}  } \Big]. 
\end{align*}
It is now enough to demonstrate that, for every $a>0$, 
$$
\limsup_{n\to\infty} \E \Big[ e^{a\sum_{X\in\Pn}\one\big\{ X \in I_{1,n}^\bell( r_n(w_n))\big\}  \times \one \big\{ \Pn(  B_{r_n(s_0)}(X))\le k \big\}  } \Big]=1. 
$$
Since the required uniform integrability has already been proven by \eqref{e:UI1}, it suffices to show that as $n\to\infty$,
$$
\E \Big[ \sum_{X\in\Pn}  \one \Big\{ X \in I_{1,n}^\bell \big( r_n(w_n) \big) \Big\}\times \one \Big\{ \Pn \big(  B_{r_n(s_0)}(X)\big)\le k \Big\}\Big]\to 0. 
$$
Proceeding  as before, we obtain that 

\begin{align*}
&\E \Big[ \sum_{X\in\Pn}  \one \Big\{ X \in I_{1,n}^\bell \big( r_n(w_n) \big) \Big\}\times \one \Big\{ \Pn \big(  B_{r_n(s_0)}(X)\big)\le k \Big\}\Big] \\
&= n e^{-(a_n+s_0)} \sum_{i=0}^{k-1} \frac{(a_n+s_0)^i}{i!}\, \P \big( X_1\in I_{1,n}^\bell (r_n(w_n)) \big) \\
&= n e^{-(a_n+s_0)} \sum_{i=0}^{k-1} \frac{(a_n+s_0)^i}{i!} \times C^*r_n(w_n)^j b_n^{-(d-j)/d} \\
&\le C^*\big( b_nr_n(w_n)^d \big)^{j/d} \to 0, \ \ \ n\to\infty. 
\end{align*}
\end{proof}

\begin{proof}[Proof of Corollary \ref{c:LDP.LknB}]
Since the desired  LDP has already been shown for the case of a Poisson input, it is sufficient to demonstrate that for every $\vep_0>0$, 
\begin{equation}  \label{e:diff.Poisson.binomial}
b_n^{-1} \log \P \big( d_{\ms{TV}} (L_{k,n}, L_{k,n}^\ms{B}) \ge \vep_0 b_n \big) \to -\infty, \ \  \text{as } n\to\infty. 
\end{equation}
Our proof is inspired by Corollary 2.3 in \cite{hirsch:owada:2022}. First, define  
{
\begin{equation}  \label{e:diluted.cubes}
G:= \big\{ 3r_n(w_n)z + [0, r_n(s_0)/\sqrt{d}]^d \subset [0,1]^d: z\in \bbz^d  \big\}, 
\end{equation}
}
and consider \emph{finitely many} translates of $G$, denoted $G_1, G_2, \dots, G_M$ for some $M$, such that $[0,1]^d$ can be covered by the union of these translates. In particular, we set $G_1=G$ and denote it specifically as $G=\{ J_1,\dots,J_{b_n'} \}$, where $J_1=[0,r_n(s_0)/\sqrt{d}]^d$ {and $b_n' := \big(  3r_n(w_n) \big)^{-d}$} denotes the number of cubes in $G$.  Since $M$ is a finite constant, \eqref{e:diff.Poisson.binomial}  follows if one can show that  
\begin{align*}
&b_n^{-1}\log \P \Big(  \sup_{A\subset E_0} \Big| \, \sum_{X\in\Pn} g(X, \Pn)\, \one \Big\{ X \in \bigcup_{\ell=1}^{b_n'} J_\ell \Big\} \, \delta_{(X, f(X, \Pn))}(A) \\
&\qquad \qquad \qquad  - \sum_{X\in \B_n} g(X, \B_n)\, \one \Big\{ X \in \bigcup_{\ell=1}^{b_n'} J_\ell \Big\} \, \delta_{(X, f(X, \B_n))}(A)\, \Big| \ge \vep_0b_n \Big) \to -\infty. 
\end{align*}
We say that $J_i$ is \emph{$n$-bad} if one of the following events happens. 
\vspace{7pt}

\noindent $(i)$ There exists $X\in \Pn \cap J_i$ such that $g(X, \Pn)=1$ (equivalently, $\Pn \big( B_{r_n(s_0)}(X) \big) \le k$; see \eqref{e:K.and.Pn1}) and $X\notin \B_n$. \\
$(ii)$ There exists $X\in \B_n \cap J_i$ such that $g(X, \B_n)=1$ and $X\notin \Pn$. \\
$(iii)$ There exist $X\in \Pn \cap \B_n \cap J_i$ and $u\ge s_0$ such that $\min\big\{ \Pn \big( B_{r_n(u)}(X) \big),  \B_n \big( B_{r_n(u)}(X) \big)\big\}\le k$ and 
$\max\big\{\Pn \big( B_{r_n(u)}(X) \big), \B_n \big( B_{r_n(u)}(X) \big)\big\}> k$. 
\vspace{7pt}

\noindent The key observation here is that 
\begin{align*}
&\sup_{A\subset E_0} \Big| \, \sum_{X\in\Pn} g(X, \Pn)\, \one \Big\{ X \in \bigcup_{\ell=1}^{b_n'} J_\ell \Big\} \, \delta_{(X, f(X, \Pn))}(A) \\
&\qquad - \sum_{X\in \B_n} g(X, \B_n)\, \one \Big\{ X \in \bigcup_{\ell=1}^{b_n'} J_\ell \Big\} \, \delta_{(X, f(X, \B_n))}(A)\, \Big| \le (k+1)\sum_{i=1}^{b_n'} \one \{ J_i \text{ is } n\text{-bad} \}. 
\end{align*}
Thus, it is enough to show that for every $\vep_0>0$, 
$$
\frac{1}{b_n} \log \P \Big( \sum_{i=1}^{b_n'} \one \{ J_i \text{ is } n\text{-bad} \} \ge \vep_0b_n\Big) \to -\infty. 
$$

For $\eta\in (0,1]$, let $\Pn^{(\eta)}$ be a homogeneous Poisson point process on $[0,1]^d$ with intensity $n\eta$. We take $\Pn^{(\eta)}$  to be  independent of $\Pn$. Then, $\Pn^{(\eta, \ms{a})}:= \Pn \cup \Pn^{(\eta)}$ represents the \emph{augmented} Poisson point process with intensity $n(1+\eta)$. Moreover, let $\Pn^{(\eta, \ms{t})}$ denote a \emph{thinned} version of $\Pn$ obtained by removing each point of $\Pn$ with probability $\eta$. 
If we denote by $\D_\eta(\Pn)$ a collection of deleted points of $\Pn$, one can write $\Pn^{(\eta, \ms{t})}=\Pn \setminus \D_\eta(\Pn)$. 
Notice that $\Pn^{(\eta, \ms{a})}\stackrel{d}{=}\mathcal P_{n(1+\eta)}$ and $\Pn^{(\eta, \ms{t})}\stackrel{d}{=}\mathcal P_{n(1-\eta)}$. Subsequently, for $\vep>0$ let 
\begin{equation}  \label{e:def.Fnvep}
F_{n,\vep}= \{ \Pn^{(\vep a_n^{-1}, \ms{t})} \subset \B_n \subset \Pn^{(\vep a_n^{-1}, \ms{a})}  \}
\end{equation}
and claim that 
\begin{equation}  \label{e:Fn.vep.negligible}
b_n^{-1}\log \P(F_{n,\vep}^c)\to-\infty,  \ \ \ n\to\infty. 
\end{equation}
For the proof we use  Lemma 1.2 in \cite{penrose:2003} to get that 
\begin{equation}  \label{e:lemma1.2.bound.Fnvep}
\P(F_{n,\vep}^c) \le e^{-n(1+\vep a_n^{-1})H( (1+\vep a_n^{-1})^{-1} )} + e^{-n(1-\vep a_n^{-1})H( (1-\vep a_n^{-1})^{-1} )}, 
\end{equation}
where $H(x)=x\log x +1-x$, $x>0$. Applying the Taylor expansion to $H(\cdot)$, we have 
$$
\limsup_{n\to\infty}\frac{1}{b_n} \log \P (F_{n,\vep}^c) \le -\lim_{n\to\infty} \frac{e^{a_n}}{2a_n^{k-1}} \Big( \frac{\vep a_n^{-1}}{1-\vep a_n^{-1}} \Big)^2=-\infty. 
$$

Suppose now that $J_i$ is an $n$-bad cube  and $F_{n,\vep}$ holds, {such that} one of the events in case $(i)$--$(iii)$ above occurs. Then, there exists $X\in \Pn^{(\vep a_n^{-1}, \ms{a})}\cap J_i$, such that 
\begin{equation}  \label{e:necessary.cond.n-bad}
\Pn^{(\vep a_n^{-1}, \ms{t})} \big( B_{r_n(s_0)}(X) \big) \le k, \ \ \text{ and } \ \ \big(  \Pn^{(\vep a_n^{-1}, \ms{a})}  \setminus \Pn^{(\vep a_n^{-1}, \ms{t})} \big) \big( B_{r_n(w_n)}(X) \big) \ge1.  
\end{equation}
Since we work with the diluted cubes in \eqref{e:diluted.cubes}, it follows from \eqref{e:necessary.cond.n-bad} and the spatial independence of Poisson processes that 
$\sum_{i=1}^{b_n'} \one \{ J_i \text{ is } n\text{-bad} \}$ is a binomial random variable. Below, we shall estimate its success probability $p_{n,\vep}$  as follows:  
\begin{align}
\begin{split}  \label{e:estimate.pnvep}
p_{n,\vep} &= \P \bigg( \bigcup_{X\in \Pn^{(\vep a_n^{-1}, \ms{a})} \cap J_1} \hspace{-10pt}\Big\{ \Pn^{(\vep a_n^{-1}, \ms{t})} \big( B_{r_n(s_0)}(X) \big) \le k, \\
&\qquad \qquad \qquad\qquad \qquad \qquad    \big(  \Pn^{(\vep a_n^{-1}, \ms{a})}  \setminus \Pn^{(\vep a_n^{-1}, \ms{t})} \big) \big( B_{r_n(w_n)}(X) \big) \ge1\Big\} \bigg) \\
&\le \E \bigg[  \sum_{X\in \Pn \cap J_1} \one \Big\{ \big(\Pn \setminus \D_{\vep a_n^{-1}}(\Pn) \big) \big( B_{r_n(s_0)}(X) \big) \le k, \\
&\qquad \qquad \qquad   \qquad \qquad  \qquad \qquad  \big(  \Pn^{(\vep a_n^{-1})}  \cup \D_{\vep a_n^{-1}}(\Pn)\big) \big( B_{r_n(w_n)}(X) \big) \ge1  \Big\}  \bigg] \\
&\quad +  \E \bigg[  \sum_{X\in \Pn^{(\vep a_n^{-1})} \cap J_1} \one \Big\{ \big(\Pn \setminus \D_{\vep a_n^{-1}}(\Pn) \big) \big( B_{r_n(s_0)}(X) \big) \le k \Big\}  \bigg] \\
&=:A_n+B_n. 
\end{split}
\end{align}
By the independence of   $\Pn^{(\vep a_n^{-1})}$ and $\Pn$, 
\begin{align}
\begin{split}  \label{e:calc.Bn}
B_n &= n\vep a_n^{-1} \P \Big( Y\in J_1, \, \big( \Pn\setminus \D_{\vep a_n^{-1}}(\Pn) \big) \big( B_{r_n(s_0)}(Y) \big) \le k\Big) \\
&= n\vep a_n^{-1} \ms{Leb} (J_1)  \P \Big( \mathcal P_{n(1-\vep a_n^{-1})} \big( B_{r_n(s_0)}(Y) \big)\le k \Big) \\
&= n\vep a_n^{-1} \ms{Leb} (J_1) \sum_{i=0}^k \frac{\big( a_n(1-\vep a_n^{-1})(1+s_0a_n^{-1}) \big)^i}{i!}\, e^{-a_n(1-\vep a_n^{-1})(1+s_0a_n^{-1}) }. 
\end{split}
\end{align}
In the above $Y$ is a uniform random variable on $[0,1]^d$, independent of $\Pn$.  Because of 
\begin{equation}  \label{e:s_0.cond}
a_n(1-\vep a_n^{-1})(1+s_0a_n^{-1}) \ge a_n -C^*, 
\end{equation}
we get that 
$$
B_n \le C^* n\vep a_n^{-1} \ms{Leb}(J_1) a_n^k e^{-a_n} = C^* \vep \ms{Leb}(J_1) b_n. 
$$
Applying the Mecke formula for Poisson point processes, 
\begin{align*}
A_n &= n \P \Big(  Y\in J_1, \, \big\{  (\Pn+\delta_Y) \setminus \D_{\vep a_n^{-1}} (\Pn +\delta_Y) \big\} \big( B_{r_n(s_0)}(Y) \big) \le k, \\
&\qquad\qquad \qquad\qquad  \big\{ \Pn^{(\vep a_n^{-1} ) } \cup \D_{\vep a_n^{-1}} (\Pn +\delta_Y) \big\} \big( B_{r_n(w_n)}(Y) \big) \ge 1 \Big). 
\end{align*}
Denote $T(Y)=\{ Y \text{ is deleted by thinning}\}$. Note that if $T(Y)$ holds, then $\D_{\vep a_n^{-1}}(\Pn +\delta_Y)=\D_{\vep a_n^{-1}}(\Pn)+\delta_Y$ and if $T(Y)$ does not hold, we have $\D_{\vep a_n^{-1}}(\Pn +\delta_Y) = \D_{\vep a_n^{-1}}(\Pn)$. Hence,
\begin{align*}
A_n &\le n \P \Big( \Big\{ Y\in J_1, \, \big( \Pn \setminus \D_{\vep a_n^{-1}}(\Pn) \big)\big( B_{r_n(s_0)} (Y)\big) \le k  \Big\} \cap T(Y) \Big) \\
&\quad + n\P \Big( Y\in J_1, \, \big( \Pn \setminus \D_{\vep a_n^{-1}}(\Pn) \big)\big( B_{r_n(s_0)} (Y)\big) \le k-1, \\
&\qquad \qquad \qquad \qquad \big( \Pn^{(\vep a_n^{-1})} \cup \D_{\vep a_n^{-1}} (\Pn)\big)\big( B_{r_n(w_n)} (Y)\big) \ge 1\Big) =: C_n + D_n. 
\end{align*}
Repeating the same calculation as in \eqref{e:calc.Bn} and using \eqref{e:s_0.cond}, 
$$
C_n \le C^* na_n^k e^{-a_n} \P \big( \{  Y\in J_1\}\cap T(Y) \big) = C^* \vep \ms{Leb}(J_1)b_n. 
$$
Since $\Pn \setminus \D_{\vep a_n^{-1}}(\Pn)$ and $\Pn^{(\vep a_n^{-1})}\cup \D_{\vep a_n^{-1}}(\Pn)$ are independent (see, e.g., Corollary 5.9 in \cite{last:penrose:2017}), 
\begin{align*}
D_n &= n \ms{Leb}(J_1) \P \Big( \big( \Pn \setminus \D_{\vep a_n^{-1}}(\Pn) \big) \big( B_{r_n(s_0)}(Y) \big) \le k-1 \Big) \\
&\qquad \qquad  \times  \P\Big(\big( \Pn^{(\vep a_n^{-1})}\cup \D_{\vep a_n^{-1}}(\Pn) \big) \big( B_{r_n(w_n)}(Y) \big) \ge 1 \Big) \\
&= n\ms{Leb}(J_1) \P \Big( \mathcal P_{n(1-\vep a_n^{-1})} \big( B_{r_n(s_0)}(Y) \big) \le k-1 \Big)  \P\Big(\mathcal P_{2n\vep a_n^{-1}} \big( B_{r_n(w_n)}(Y) \big) \ge 1 \Big) 
\end{align*}
By \eqref{e:s_0.cond} and Markov's inequality, 
\begin{align*}
D_n &\le C^* n \ms{Leb}(J_1) \, a_n^{k-1} e^{-a_n} \E \Big[ \mathcal P_{2n\vep a_n^{-1}} \big( B_{r_n(w_n)}(Y) \big) \Big] \\
&= C^* n \ms{Leb}(J_1) \, a_n^{k-1} e^{-a_n}  2 \vep (1+w_na_n^{-1}) \le C^* \vep \ms{Leb}(J_1) b_n. 
\end{align*}
Combining all these calculations concludes that $p_{n, \vep }\le C^* \vep \ms{Leb}(J_1) b_n$. If one takes sufficiently small $\vep\in (0,\vep_0)$, we have that $b_n' p_{n,\vep} \le C^* b_n' \vep \ms{Leb}(J_1)b_n = C^* \vep b_n \le \vep_0 b_n$ for large $n$ enough. Therefore, one can exploit the binomial concentration inequality (see, e.g., Lemma 1.1 in \cite{penrose:2003}) to obtain that 
\begin{align*}
\limsup_{n\to\infty} b_n^{-1} \log \P \Big( \text{Bin} (b_n', C^* \vep \ms{Leb}(J_1)b_n) \ge \vep_0 b_n \Big) &\le -\frac{\vep_0}{2}\, \lim_{n\to\infty} \log \Big\{ \frac{\vep_0}{C^* b_n' \vep \ms{Leb}(J_1)} \Big\} \\
&= {-\frac{\vep_0}{2}\, \log \Big\{ \frac{(3\sqrt{d})^d\vep_0}{C^* \vep} \Big\}.}
\end{align*}
The last term goes to $-\infty$ as $\vep \to 0$. Combining this result with \eqref{e:Fn.vep.negligible} concludes the proof.
\end{proof}

\begin{proof}[Proof of Corollary \ref{c:LDP.application}]
Define the map $S: M_+(E_0) \to [0,\infty)$ by $S(\rho)=\rho(E_0)$. Since $S$ is continuous in the weak topology {and $H_k(\cdot\,|\, \ms{Leb}\otimes \tau_k)=\Lambda_k^*$ is a good rate function}, the contraction principle (see, e.g., \cite[Theorem 4.2.1]{dembo:zeitouni:1998}) is applied  to the LDP in Theorem \ref{t:LDP.L.kn}. In conclusion,  
$$
S\Big(\frac{L_{k,n}}{b_n}\Big) = \frac{1}{b_n}\sum_{X\in \Pn}g(X,\Pn) = \frac{T_{k,n}}{b_n}, \  \ n\ge1, 
$$
shows an LDP with rate $b_n$ and rate function 
\begin{equation}  \label{e:rate.func.contraction.principle}
\inf_{\nu\in M_+(E_0), \, \nu(E_0)=x} H_k(\nu | \ms{Leb}\otimes \tau_k), \ \ \ x\in \R, 
\end{equation}
where $H_k$ is the relative entropy defined at \eqref{e:def.relative.entropy}. The rest of the argument must be devoted to verifying that \eqref{e:rate.func.contraction.principle} coincides with $I_k(x)$ for every $x\in \R$; this is however an immediate result as an analogue of  Equ.~(5.35) of \cite{hirsch:owada:2022}.  

The LDP for $(T_{k,n}^\ms{B}/b_n)_{n\ge1}$ is obtained by applying the  contraction principle to Corollary \ref{c:LDP.LknB}. 
\end{proof}
\medskip

\subsection{Proofs of Theorem \ref{t:M0.L.kn}, Corollary \ref{c:M0.LknB}, and Corollary \ref{c:M0.application}}   \label{sec:proof.M0}

\begin{proof}[Proof of Theorem \ref{t:M0.L.kn}]
Let $C_K^+(E)$ denote a collection of continuous and non-negative functions on $E$ with compact support.  
Given two such functions $U_\ell \in C_K^+(E)$, $\ell=1,2$,  together with $\vep_1, \vep_2 > 0$, we define $F_{U_1, U_2, \vep_1, \vep_2}: M_p(E) \to [0,1]$ by 
\begin{equation}  \label{e:def.F}
F_{U_1, U_2, \vep_1, \vep_2} (\eta) = \Big( 1-e^{-(\eta(U_1) - \vep_1)_+} \Big)\Big( 1-e^{-(\eta(U_2) - \vep_2)_+} \Big), 
\end{equation}
where $\eta(U_\ell) = \int_E U_\ell(x,u) \eta (\dif x, \dif u)$, {and $(a)_+=a$ if $a\ge0$ and $0$ otherwise}. 
Notice that $F_{U_1, U_2, \vep_1, \vep_2}\in \mathcal C_0$. 
In what follows, we fix $U_1, U_2$ and $\vep_1, \vep_2$, and simply write $F=F_{U_1, U_2, \vep_1, \vep_2}$. Define 

$\xi_{k,n} (\cdot):= b_n^{-1}\P ( L_{k,n} \in \cdot )$. 
Then, according to Theorem A.2 in \cite{hult:samorodnitsky:2010},  \eqref{e:M0.convergence} follows if one can show that 
$$
\xi_{k,n} (F) \to \xi_k (F), \ \ \text{as } n\to\infty. 
$$
First, note that
$$
\xi_{k,n}(F) = \int_{M_p(E)} F(\eta) \xi_{k,n}(\dif \eta) = b_n^{-1} \E \big[ F(L_{k,n}) \big]. 
$$
Let $\zeta_{k,n}$ denote a Poisson point process on $E$ with mean measure 
\begin{equation}  \label{e:mean.meas.zeta.kn.original}
\frac{b_n}{(k-1)!}\, e^{-u} \dif x \dif u, \ \ x\in [0,1]^d, \ u\in \R. 
\end{equation}
In this setting, our proof breaks down into two parts: 
\begin{align}
&b_n^{-1} \big|\,  \E\big[ F(L_{k,n}) \big] - \E \big[ F(\zeta_{k,n}) \big]\, \big| \to 0, \ \ \ n\to\infty, \label{e:1st.M0}  \\
&b_n^{-1} \E \big[ F(\zeta_{k,n}) \big] \to \xi_k(F),  \ \ \ n\to\infty.   \label{e:2nd.M0}
\end{align}
\textit{Proof of \eqref{e:1st.M0}}: Since $U_\ell$  has compact support on $E$, there exists $s_0\in \R$, so that 
$$
 \text{supp} (U_1)\bigcup \text{supp} (U_2)\subset [0,1]^d \times (s_0,\infty], 
$$
where $\text{supp}(U_\ell)$ represents the support of $U_\ell$. 
Hence, we may assume, without loss of generality, that $L_{k,n}$ and $\zeta_{k,n}$ are both random elements of the \emph{restricted} state space $M_p\big( [0,1]^d \times (s_0, \infty] \big)$. Equivalently, one can reformulate $L_{k,n}$ by
\begin{equation}  \label{e:Lkn.reformulate.M0}
L_{k,n} = \begin{cases}
\sum_{X\in\Pn} g(X,\Pn)\, \delta_{(X, f(X,\Pn))} & \text{ if } |\Pn| >k,\\
\emptyset & \text{ if } |\Pn| \le k, 
\end{cases}
\end{equation}
in the same way as \eqref{e:def.Lkn2}. Similarly,  $\zeta_{k,n}$ can be defined as the Poisson point process whose mean measure is given by the restricted version of \eqref{e:mean.meas.zeta.kn.original}; that is, 
$$
(\Leb\otimes \tau_{k,n}) (\dif x, \dif u)  :=\frac{b_n}{(k-1)!}\, e^{-u} \one \{ u\ge s_0 \}\dif x \dif u,  \ \ x\in [0,1]^d, \ u \in \R. 
$$

Next, it is not hard to prove that $F$  in \eqref{e:def.F} is a $1$-Lipschitz function with respect to the total variation distance on the space of point measures. Namely,  for $\eta_1, \eta_2 \in M_p(E)$, 
$$
\big| F(\eta_1)-F(\eta_2) \big| \le 2 d_{\ms{TV}}(\eta_1,\eta_2). 
$$
Thus, by \eqref{e:def.KR.dist}, 
$$
\big|\,  \E\big[ F(L_{k,n}) \big] - \E \big[ F(\zeta_{k,n}) \big]\, \big|  \le  d_{\ms{KR}} \big( \mathcal L(L_{k,n}), \mathcal L(\zeta_{k,n}) \big). 
$$

\begin{proposition}    \label{p:KR.conv2}
We have, as $n\to\infty$, 
$$
b_n^{-1} d_{\ms{KR}} \big( \mathcal L(L_{k,n}), \mathcal L(\zeta_{k,n}) \big)\to 0. 
$$
\end{proposition}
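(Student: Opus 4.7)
The plan is to apply Theorem 6.4 of \cite{bobrowski:schulte:yogeshwaran:2021} directly to $L_{k,n}$ on the full unit cube $[0,1]^d$, in essentially the same manner as in the proof of Proposition \ref{p:KR.conv1}. We take the stopping set $\mathcal S(x,\omega) = B_{R_k(x,\omega)}(x)$ and the stabilization region $S_x = B_{r_n(w_n)}(x)$ for a sequence $w_n\to\infty$ with $w_n = o(a_n)$. The theorem then yields a bound of the form
$$
d_{\ms{KR}}\big(\mathcal L(L_{k,n}), \mathcal L(\zeta_{k,n})\big) \le d_{\ms{TV}}\big(\E[L_{k,n}(\cdot)], \Leb\otimes \tau_{k,n}\big) + E_1 + E_2 + E_3,
$$
where $E_1, E_2, E_3$ are the direct analogues of \eqref{e:E1}--\eqref{e:E3}, now with the integrals running over all of $[0,1]^d$. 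It thus suffices to show that each of the four terms on the right-hand side is $o(b_n)$ as $n\to\infty$.

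For the intensity measure, the Mecke formula (exactly as in the proof of Proposition \ref{p:KR.conv1}) shows that $\E[L_{k,n}(\cdot)]$ has density $n e^{-(a_n+u)}(a_n+u)^{k-1}/(k-1)!$ on $[0,1]^d\times (s_0,\infty)$, which can be rewritten as $\frac{b_n}{(k-1)!}e^{-u}(1+u/a_n)^{k-1}$. Subtracting the density $\frac{b_n}{(k-1)!}e^{-u}$ of $\Leb\otimes\tau_{k,n}$ and dividing by $b_n$ gives
$$
b_n^{-1}\, d_{\ms{TV}}\big(\E[L_{k,n}], \Leb\otimes \tau_{k,n}\big) \le \frac{1}{(k-1)!}\int_{s_0}^\infty e^{-u}\big|(1+u/a_n)^{k-1}-1\big|\, du,
$$
which vanishes as $n\to\infty$ by dominated convergence (once $a_n\ge 1+|s_0|$, the integrand is bounded by a constant multiple of $e^{-u}(1+|u|)^{k-1}$).

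The three stabilization error terms are handled by straightforward adaptations of the bounds in the proof of Proposition \ref{p:KR.conv1}. One has $E_1 \le 2n\sum_{i=0}^{k-1}e^{-(a_n+w_n)}(a_n+w_n)^i/i! \le C^* b_n e^{-w_n}(1+w_n/a_n)^{k-1}$, so $E_1/b_n\to 0$. For $E_2$, combining $\one\{S_x\cap S_z\neq\emptyset\}\le\one\{\|x-z\|\le 2r_n(w_n)\}$ with $\E[g(x,\Pn+\delta_x)]\le C^* a_n^{k-1}e^{-a_n}$ yields $E_2\le C^*n^2 a_n^{2(k-1)}e^{-2a_n} r_n(w_n)^d \le C^* n a_n^{2k-1}e^{-2a_n}$, hence $E_2/b_n \le C^* a_n^k e^{-a_n}\to 0$. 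Finally, $E_3$ splits into $E_{3,1}$ (over $\|x-z\|\le r_n(s_0)$) and $E_{3,2}$ (over $r_n(s_0)<\|x-z\|\le 2r_n(w_n)$) exactly as in \eqref{e:E3.split}; the same computations, carried out over the unit cube rather than $K_\ell$, give $E_{3,1}/b_n = O(a_n^{-d})$ and $E_{3,2}/b_n = O(a_n^k e^{-a_n/2})$, both of which tend to $0$.

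I expect the main difficulty to be bookkeeping rather than conceptual: the original computation in Proposition \ref{p:KR.conv1} was carried out on a small cube $Q_\ell$ of volume $b_n^{-1}$ in the regime $b_n\to\infty$, whereas here we integrate over the full unit cube in the opposite regime $b_n\to 0$. One must verify that the factor $b_n = na_n^{k-1}e^{-a_n}$ naturally produced by each estimate still yields $o(b_n)$ decay after normalization. The calculations above confirm this, and no essentially new ingredient is needed beyond the freedom to pick an intermediate sequence $w_n$ with $w_n\to\infty$ and $w_n = o(a_n)$.
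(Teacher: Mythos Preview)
Your proposal is correct and follows essentially the same approach as the paper: both apply \cite[Theorem 6.4]{bobrowski:schulte:yogeshwaran:2021} on the full torus with $\mathcal S(x,\omega)=B_{R_k(x,\omega)}(x)$ and $S_x=B_{r_n(w_n)}(x)$, and then verify that the total variation term and the three stabilization errors $E_1,E_2,E_3$ are each $o(b_n)$. The only cosmetic difference is in the bound for $E_2$, where the paper simply drops the indicator $\one\{S_x\cap S_z\neq\emptyset\}$ to get $b_n^{-1}E_2\le C^* b_n\to 0$, while you retain the volume constraint to get $b_n^{-1}E_2\le C^* a_n^k e^{-a_n}\to 0$; both are valid in the present regime $b_n\to 0$.
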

\begin{proof}[Proof of Proposition \ref{p:KR.conv2}]
The proof is analogous to that of Proposition \ref{p:KR.conv1}. Precisely, we first need to show that 
$$
b_n^{-1} d_{\ms{TV}} \big( \E [L_{k,n}(\cdot)], \Leb \otimes \tau_{k,n} \big) \to 0, \ \ \ n\to\infty, 
$$
and verify also that  $b_n^{-1}E_i \to 0$, $n\to\infty$, for $i=1,2,3$, where $E_i$'s are defined analogously  to \eqref{e:E1}, \eqref{e:E2}, and \eqref{e:E3}. More concretely, they are respectively defined as 
$$
E_1 := 2n\int_{[0,1]^d} \E \big[  g(x,\Pn+\delta_x) \, \one \big\{  \mathcal S (x,\Pn+\delta_x) \not\subset S_x \big\} \big]\dif x, 
$$
$$
E_2 := 2n^2 \int_{[0,1]^d}\int_{[0,1]^d}  \hspace{-.2cm}\one \{ S_x \cap S_z \neq \emptyset \} \,\E \big[  g(x,\Pn+\delta_x) \big]  \E \big[  g(z,\Pn+\delta_z) \big] \dif x \dif z, 
$$
and 
$$
	E_3 := 2n^2 \int_{[0,1]^d}\int_{[0,1]^d} \hspace{-.2cm}\one \{ S_x \cap S_z \neq \emptyset \} \,\E \big[  g(x,\Pn+\delta_x+\delta_z) \,  g(z,\Pn+\delta_x +\delta_z) \big] \dif x \dif z, 
$$
for which $\mathcal S(x,\omega)=B_{R_k(x,\omega)}(x)$ for $x\in [0,1]^d$ and  $\omega\in M_p\big( [0,1]^d \big)$, and $S_x = B_{r_n(w_n)}(x)$ for some sequence $w_n\to\infty$ with $w_n=o(a_n)$, $n\to\infty$. 

First, for  $B\subset [0,1]^d$ and $u>s_0$, by the Mecke formula for Poisson point processes and \eqref{e:K.and.Pn1},
$$
\E\big[ L_{k,n}(B\times (u,\infty)) \big] = n\P\Big(Y\in B, \, (\Pn +\delta_Y ) \big( B_{r_n(u)}(Y) \big) \le k  \Big), 
$$
where $Y$ is a uniform random variable on $[0,1]^d$, independent of $\Pn$. By the conditioning on $Y$, 
\begin{align*}
\E\big[ L_{k,n}(B\times (u,\infty)) \big]  &= n\, \Leb(B)\sum_{i=0}^{k-1} e^{-(a_n+u)} \frac{(a_n+u)^i}{i!}. 
\end{align*}
This means that $\E\big[ L_{k,n}(\cdot) \big]$ has the density 
$$
n \, \frac{e^{-(a_n+u)}(a_n+u)^{k-1}}{(k-1)!}, \ \ x \in [0,1]^d, \ u >s_0, 
$$
and hence, it follows from the dominated convergence theorem that 
\begin{align*}
&b_n^{-1} d_{\ms{TV}} \big( \E [L_{k,n}(\cdot)], \, \Leb\otimes \tau_{k,n} \big)  \\
&\le b_n^{-1} \int_{[0,1]^d \times (s_0,\infty)} \Big| \, ne^{-(a_n+u)}\frac{(a_n+u)^{k-1}}{(k-1)!} - b_n \frac{e^{-u}}{(k-1)!}  \,  \Big|\dif x \dif u \\
&= \frac{1}{(k-1)!}\, \int_{s_0}^\infty \Big|   \, \Big( 1+\frac{u}{a_n} \Big)^{k-1}-1  \,  \Big| e^{-u}\dif u \to0,  \ \ \ n\to\infty. 
\end{align*}
Subsequently, 
\begin{align*}
b_n^{-1}E_1 &\le 2nb_n^{-1} \int_{[0,1]^d} \P \Big( \Pn \big( B_{r_n(w_n)}(x) \big) \le k-1 \Big)\dif x\\
&=2nb_n^{-1} \sum_{i=0}^{k-1}e^{-(a_n+w_n)} \frac{(a_n+w_n)^i}{i!} \le C^* e^{-w_n} \to 0, \ \ \ n\to\infty, 
\end{align*}
while we also have 
$$
b_n^{-1} E_2 \le 2n^2 b_n^{-1} \Big\{ \sum_{i=0}^{k-1} e^{-(a_n+s_0)} \frac{(a_n+s_0)^i}{i!} \Big\}^2 \le C^*b_n\to 0, \ \ \ n\to\infty. 
$$
Similarly to \eqref{e:E3.split}, $E_3$ can be split into two additional terms: 
\begin{align*}
E_3 &\le 2n^2 \int_{[0,1]^d}\int_{[0,1]^d}\one \big\{ \| x-z\| \le r_n(s_0) \big\}  \\
&\qquad \qquad \qquad \times \P \Big( (\Pn+\delta_z) \big( B_{r_n(s_0)}(x) \big) \le k -1, \,   (\Pn+\delta_x) \big( B_{r_n(s_0)}(z) \big) \le k -1\Big) \dif x \dif z \\
&\quad + 2n^2 \int_{[0,1]^d}\int_{[0,1]^d}\one \big\{r_n(s_0) < \| x-z\| \le 2r_n(w_n) \big\}  \\
&\qquad \qquad \qquad \times \P \Big( (\Pn+\delta_z) \big( B_{r_n(s_0)}(x) \big) \le k -1, \,   (\Pn +\delta_x) \big( B_{r_n(s_0)}(z) \big) \le k -1\Big) \dif x \dif z \\
&=: E_{3,1}+E_{3,2}. 
\end{align*}
Although we shall skip  detailed discussions,  one can still demonstrate  that $b_n^{-1}E_{3,1}\to0$ and $b_n^{-1}E_{3,2}\to0$, by the arguments nearly identical to those for Proposition \ref{p:KR.conv1}. 
\end{proof}

Now, the proof of Proposition \ref{p:KR.conv2} has been completed, which in turn concludes  \eqref{e:1st.M0}. Our next goal is to prove \eqref{e:2nd.M0}.

\textit{Proof of \eqref{e:2nd.M0}}: Note that $\zeta_{k,n}$ can be written as 
$$
\zeta_{k,n} = \sum_{i=1}^{N_n} \delta_{(T_i, Z_i)},
$$ 
where $(T_i, Z_i)$ are i.i.d.~random variables on $E$ with density given by $e^{-(u-s_0)} \one \{ u\ge s_0 \} \dif x\dif u$, and $N_n$ is Poisson distributed with mean $b_ne^{-s_0}/(k-1)!$. Furthermore, $(T_i, Z_i)$ and $N_n$ are taken to be independent.  Substituting this representation, 
\begin{align*}
b_n^{-1}\E \big[ F(\zeta_{k,n}) \big] &= b_n^{-1} \E \Big[  \prod_{\ell=1}^2 \Big( 1-e^{-\big( \sum_{i=1}^{N_n} U_\ell (T_i, Z_i) -\vep_\ell\big)_+} \Big) \Big] \\
&=b_n^{-1} \E \Big[  \prod_{\ell=1}^2 \Big( 1-e^{-\big(  U_\ell (T_1, Z_1) -\vep_\ell\big)_+} \Big) \, \one \{ N_n=1 \}\Big] \\
&\qquad \qquad + b_n^{-1} \E \Big[  \prod_{\ell=1}^2 \Big( 1-e^{-\big( \sum_{i=1}^{N_n} U_\ell (T_i, Z_i) -\vep_\ell\big)_+} \Big) \one \{ N_n\ge2 \}\Big]\\
&=: A_n+B_n. 
\end{align*}
Of the last two terms, one can immediately show that 
$$
B_n \le b_n^{-1} \P(N_n\ge 2) \le \Big( \frac{e^{-s_0}}{(k-1)!} \Big)^2 b_n\to 0, \ \ \text{as } n\to\infty. 
$$
By the independence of $(T_1, Z_1)$ and $N_n$, we have as $n\to\infty$, 
\begin{align*}
A_n &= b_n^{-1} \E \Big[  \prod_{\ell=1}^2 \Big( 1-e^{-\big(  U_\ell (T_1, Z_1) -\vep_\ell\big)_+} \Big) \Big] \P(N_n=1) \\
&= \frac{e^{-s_0}}{(k-1)!}\, e^{-e^{-s_0}b_n/(k-1)!} \int_E \prod_{\ell=1}^2 \Big( 1-e^{-\big( U_\ell(x,u) -\vep_\ell\big)_+} \Big) e^{-(u-s_0)} \one \{ u\ge s_0 \} \dif x \dif u \\
&\to \frac{1}{(k-1)!} \int_E \prod_{\ell=1}^2\Big( 1-e^{-\big( U_\ell(x,u) -\vep_\ell\big)_+} \Big) e^{-u} \dif x \dif u = \xi_k(F). 
\end{align*}
We thus conclude that $A_n+B_n \to \xi_k(F)$, $n\to\infty$, as required. 
\end{proof}

\begin{proof}[Proof of Corollary \ref{c:M0.LknB}]

Because of \eqref{e:1st.M0} and \eqref{e:2nd.M0}, it is sufficient to show that 
$$
b_n^{-1} \E \Big[ \, \big| F(L_{k,n})-F(L_{k,n}^\ms{B})   \big|\, \Big] \to 0, \ \ \ n\to\infty, 
$$
where $F$ is defined at \eqref{e:def.F}. 
Under the map $F$, one can represent $L_{k,n}$ as in \eqref{e:Lkn.reformulate.M0}. Clearly, $L_{k,n}^\ms{B}$ has the same representation as an element of $M_p\big( [0,1]^d\times (s_0,\infty] \big)$. 
Since $F$ is bounded,
$$
b_n^{-1} \E \Big[ \, \big| F(L_{k,n})-F(L_{k,n}^\ms{B})   \big|\, \Big] \le 2b_n^{-1} \P( L_{k,n}\neq L_{k,n}^\ms{B}). 
$$
We now claim that $b_n^{-1}\P( L_{k,n}\neq L_{k,n}^\ms{B} )\to 0$ as $n\to\infty$. The proof is analogous to that of \eqref{e:diff.Poisson.binomial} by  borrowing  the idea of $n$-bad cubes. Specifically, we say that $[0,1]^d$ is \emph{$n$-bad} if one of the following events occurs. 
\vspace{7pt}

\noindent $(i)$ There exists $X\in \Pn$ such that $g(X, \Pn)=1$ and $X\notin \B_n$. \\
$(ii)$ There exists $X\in \B_n$ such that $g(X, \B_n)=1$ and $X\notin \Pn$. \\
$(iii)$ There exist $X\in \Pn \cap \B_n$ and $u\ge s_0$ such that $\min\big\{ \Pn \big( B_{r_n(u)}(X) \big),  \B_n \big( B_{r_n(u)}(X) \big)\big\}\le k$ and 
$\max\big\{\Pn \big( B_{r_n(u)}(X) \big), \B_n \big( B_{r_n(u)}(X) \big)\big\}> k$. 
\vspace{7pt}

\noindent The key observation is that  $[0,1]^d$ becomes $n$-bad whenever $L_{k,n}\neq L_{k,n}^\ms{B}$. Using this fact, we now need to show that 

\begin{equation}  \label{e:n.bad.cube.M0}
b_n^{-1} \P\big( [0,1]^d \text{ is } n\text{-bad} \big) \to 0, \ \ \ n\to\infty. 
\end{equation}
The first step for the proof of  \eqref{e:n.bad.cube.M0} is to demonstrate that 
\begin{equation}  \label{e:Fnvep.M0}
b_n^{-1}\P(F_{n,\vep}^c) \to 0, \ \ \ n\to\infty, 
\end{equation}
where $F_{n,\vep}$ is given in \eqref{e:def.Fnvep}. 
By virtue of the bound in \eqref{e:lemma1.2.bound.Fnvep}, together with an application of the Taylor expansion to $H(\cdot)$, and the assumption $a_n=o(n^{1/3})$, one can get \eqref{e:Fnvep.M0} as desired. Observe also that if $[0,1]^d$ is $n$-bad under $F_{n,\vep}$, then there exists $X\in \Pn^{(\vep a_n^{-1}, \ms{a})}$ such that \eqref{e:necessary.cond.n-bad} holds. Hence, by \eqref{e:Fnvep.M0} and Markov's inequality, 
\begin{align*}
&b_n^{-1} \P\big( [0,1]^d \text{ is } n\text{-bad}\big) \le b_n^{-1} \P\big( \big\{[0,1]^d \text{ is } n\text{-bad}\big\}\cap F_{n,\vep}\big) + o(1) \\
&\le b_n^{-1} \P \Big(  \bigcup_{X\in \Pn^{(\vep a_n^{-1}, \ms{a})} } \hspace{-10pt}\Big\{ \Pn^{(\vep a_n^{-1}, \ms{t})} \big( B_{r_n(s_0)}(X) \big) \le k, \,  \\
&\qquad \qquad \qquad \qquad\qquad \qquad\big(  \Pn^{(\vep a_n^{-1}, \ms{a})}  \setminus \Pn^{(\vep a_n^{-1}, \ms{t})} \big) \big( B_{r_n(w_n)}(X) \big) \ge1\Big\} \Big) +o(1)\\
&\le b_n^{-1}\bigg\{ \E \Big[  \sum_{X\in \Pn} \one \Big\{ \big(\Pn \setminus \D_{\vep a_n^{-1}}(\Pn) \big) \big( B_{r_n(s_0)}(X) \big) \le k, \\
&\qquad \qquad \qquad   \qquad \qquad  \qquad \qquad  \big(  \Pn^{(\vep a_n^{-1})}  \cup \D_{\vep a_n^{-1}}(\Pn)\big) \big( B_{r_n(w_n)}(X) \big) \ge1  \Big\}  \Big] \\
&\quad +  \E \Big[  \sum_{X\in \Pn^{(\vep a_n^{-1})}} \one \Big\{ \big(\Pn \setminus \D_{\vep a_n^{-1}}(\Pn) \big) \big( B_{r_n(s_0)}(X) \big) \le k \Big\}  \Big]\bigg\} +o(1)\\
&=:b_n^{-1}(A_n'+B_n') +o(1). 
\end{align*}
where $\vep\in (0,1)$ is an arbitrary constant. 
Repeating the calculations very similar to those bounding $A_n, B_n$ in  \eqref{e:estimate.pnvep}, one can see that $A_n'+B_n' \le C^* \vep b_n$. Thus, $\limsup_{n\to\infty}b_n^{-1} \P\big( [0,1]^d \text{ is } n\text{-bad}\big) \le C^*\vep$, and letting $\vep \to0$ completes the proof of Corollary \ref{c:M0.LknB}. 
\end{proof}

\begin{proof}[Proof of Corollary \ref{c:M0.application}]
We prove only the first statement. By a straightforward modification of Theorem \ref{t:M0.L.kn} by restricting the state space from $E$ to $E_0=[0,1]^d\times (s_0,\infty]$, we have, as $n\to\infty$, 
$$
\xi_{k,n}(\cdot):=b_n^{-1}\P(L_{k,n}\in \cdot) \to \xi_k \ \ \text{in } \M_0. 
$$
Due to the change of the state space, $L_{k,n}$ is now formulated as in \eqref{e:Lkn.reformulate.M0}, while the limit $\xi_k$ is taken to be 
$$
\xi_k(\cdot)=\frac{1}{(k-1)!}\int_{E_0} \one \{ \delta_{(x,u)}\in \cdot \} e^{-u} \dif x \dif u. 
$$
Now, we define a map $V:M_p(E_0)\to \bbn:=\{ 0,1,2,\dots \}$ by $V(\rho)=\rho(E_0)$. Here, $\bbn$ is equipped with the discrete topology. 
Since  $V$ is continuous in the weak topology, 
it follows from  \cite[Theorem 2.5]{hult:lindskog:2006a} that 
\begin{equation}  \label{e:another.M0.conv}
\xi_{k,n}\circ V^{-1} \to \xi_k \circ V^{-1}, \ \ \text{ in } \M_0,  \ \ \ n\to\infty.  
\end{equation}

Note that $\one_{[1,\infty)}(x)$ is continuous and bounded on $\bbn$ (in terms of the discrete topology), vanishing in the neighborhood of $0$ (i.e., the origin of $\bbn$). Thus, the $\M_0$-convergence in \eqref{e:another.M0.conv} implies that 
$$
b_n^{-1}\P(T_{k,n}\ge1) = \int_{\bbn} \one_{[1,\infty)}(x) \xi_{k,n}\circ V^{-1}(\dif x) \to \int_{\bbn} \one_{[1,\infty)}(x) \xi_{k}\circ V^{-1}(\dif x) = \alpha_k, 
$$
as desired.
\end{proof}

\noindent \textbf{Acknowledgment}: The third author is thankful for fruitful discussions with Yogeshwaran D.~and Z.~Wei, which has   helped him to deduce the desired $\M_0$-convergence in Theorem \ref{t:M0.L.kn} by means of \cite[Theorem 6.4]{bobrowski:schulte:yogeshwaran:2021}. 

\bibliography{LDP-dense}
\end{document}